\newcommand{\cov}{{\operator@font cov}}
\newcommand{\var}{{\operator@font var}}
\newcommand{\vol}{{\operator@font vol}}
\newcommand{\corr}{{\operator@font corr}}
\newcommand{\diam}{{\operator@font diam}}
\newcommand{\Av}{{\operator@font Av}}
\newcommand{\trig}{{\operator@font trig}}
\newcommand{\Enh}{{\operator@font Enh}}
\newcommand{\EEnh}{\overline {\operator@font Enh}}
\newcommand{\Com}{{\operator@font Com}}
\newcommand{\Clus}{{\operator@font Clus}}
\numberwithin{equation}{section}
\theoremstyle{plain}
\newtheorem{thm}{\protect\theoremname}[section]
\theoremstyle{remark}
\newtheorem{rem}{\protect\remarkname}[section]
\theoremstyle{plain}
\newtheorem{lem}{\protect\lemmaname}[section]
  \newenvironment{proof}[1][\proofname]{\par
    \normalfont\topsep6\p@\@plus6\p@\relax
    \trivlist
    \itemindent\parindent
    \item[\hskip\labelsep
          \scshape
      #1]\ignorespaces
  }{%
    \endtrivlist\@endpefalse
  }
  \providecommand{\proofname}{Proof}
\theoremstyle{assumption}
\newtheorem{ass}{\protect\assumptionname}[section]
\theoremstyle{definition}
\newtheorem{defn}{\protect\definitionname}[section]
\theoremstyle{plain}
\theoremstyle{definition}
\theoremstyle{plain}
\newtheorem{prop}{\protect\propositionname}[section]
\theoremstyle{definition}
\newtheorem{rmk}{\protect\remarkname}[section]
\newcommand{\E}{\mathbf{E}}
\renewcommand{\P}{\mathbf{P}}
\newcommand{\aA}{\mathcal A}
\newcommand{\eE}{\mathcal E}
\newcommand{\fF}{\mathcal F}
\newcommand{\gG}{\mathcal G}
\newcommand{\iI}{\mathcal I}
\newcommand{\mM}{\mathcal M}
\newcommand{\nN}{\mathcal N}
\newcommand{\oO}{\mathcal O}
\newcommand{\rR}{\mathcal R}
\newcommand{\sS}{\mathcal S}
\newcommand{\tT}{\mathcal T}
\newcommand{\EE}{\mathbb{E}}
\newcommand{\HH}{\mathbb{H}}
\newcommand{\NN}{\mathbb{N}}
\newcommand{\PP}{\mathbb{P}}
\newcommand{\RR}{\mathbb{R}}
\newcommand{\1}{\mathbf{1}}
\newcommand{\fC}{\mathfrak{C}}
\newcommand{\fS}{\mathfrak{S}}
\newcommand{\eps}{\varepsilon}
\date{}
\providecommand{\assumptionname}{Assumption}
\providecommand{\corollaryname}{Corollary}
\providecommand{\definitionname}{Definition}
\providecommand{\examplename}{Example}
\providecommand{\lemmaname}{Lemma}
\providecommand{\propositionname}{Proposition}
\providecommand{\remarkname}{Remark}
\providecommand{\theoremname}{Theorem}
\begin{document}

\title{Parabolic Anderson Model in the Hyperbolic Space. Part II: Quenched Asymptotics}

\author{Xi Geng\thanks{School of Mathematics and Statistics, University of Melbourne, Australia.
Email: xi.geng@unimelb.edu.au.},$\ $ Sheng Wang\thanks{School of Mathematics and Statistics, University of Melbourne, Australia.
Email: sheng.wang5@student.unimelb.edu.au.}$\ $ and Weijun Xu\thanks{Institute for Theoretical Sciences, Westlake University, China. Email: xuweijun@westlake.edu.cn. } }

\maketitle

\begin{abstract}
We establish the exact quenched asymptotic
growth of the solution to the parabolic Anderson model (PAM) in the hyperbolic space with a regular, stationary, time-independent Gaussian potential. More precisely, we show that there exists a deterministic constant $L^*>0$ such that with probability one, the solution $u$ to PAM with constant initial data has pointwise growth asymptotics
\begin{equation*}
    u(t,x)\sim e^{L^{*}t^{5/3}+o(t^{5/3})}
\end{equation*}
as $t \rightarrow +\infty$. Both the power $t^{5/3}$ and the exact value of $L^*$ are different from their counterparts in the Euclidean situation. They are determined through an explicit optimisation procedure. Our proof relies on certain fine localisation techniques, which also reveals a stronger non-Euclidean localisation mechanism. 
\end{abstract}

\setcounter{tocdepth}{2}
\tableofcontents

\section{Introduction}

\subsection{The problem and main result}

This article is the second part of our investigation on the long time asymptotics of the parabolic Anderson model (PAM) on the hyperbolic space with a regular, stationary, time-independent Gaussian potential. We consider the solution $u$ to
\begin{equation}\label{eq:PAMIntro}
\partial_{t}u=\Delta u+\xi\cdot u, \qquad u(0,\cdot) \equiv 1
\end{equation}
on the standard hyperbolic space $\HH^d$ with constant curvature $\kappa \equiv -1$. Here $\xi$ is a Gaussian field satisfying the following assumption. 

\begin{ass} \label{ass:xi}
    $\xi$ is a real-valued, centered, time-independent, stationary Gaussian field on $\HH^d$ defined on some probability space $(\Omega,\mathcal{F},\mathbf{P})$. Furthermore, its covariance function\footnote{Stationarity means for every $n$, every $x_1, \dots, x_n \in \HH^d$ and every $g \in SO^{+}(d,1)$ (the group of orientation preserving isometries over $\HH^d$), we have
    \begin{equation*}
        \big( \xi(g \cdot x_1), \dots, \xi (g \cdot x_n) \big) \stackrel{\text{law}}{=} \big( \xi(x_1), \dots, \xi(x_n) \big)\;.
    \end{equation*}By stationarity, the covariance function must be a function of distance only (\cite[Lemma~2.1]{GX25}).}
    \begin{equation*}
        C (x,y) = C \big( d(x,y) \big) = \mathbf{E} \big[\xi(x) \xi(y) \big]
    \end{equation*}
    has two continuous derivatives on $\RR^+$, and there exists $r_0>0$ such that $C (r) = 0$ whenever $r > r_0$. Here $d$ denotes the hyperbolic distance. 
\end{ass}

Except for the finiteness of correlation length,
the setting here is identical to the one in the article \cite{GX25}. A simple way to construct such a $\xi$ is to convolute the white noise on the isometry group with a smooth and compactly supported mollifier. 

In the first part \cite{GX25}, we established the second order annealed asymptotics and demonstrated the same moment intermittency property as in the Euclidean case. One interesting phenomenon there is that the fluctuation exponent is determined by the geometry
of the \textit{Euclidean} (rather than hyperbolic) Laplacian. The purpose of the present article is to investigate the quenched asymptotic behaviour of the PAM. Our main theorem
is stated as follows. 

\begin{thm} \label{thm:MainThm}
Let $u$ be the solution to \eqref{eq:PAMIntro}. There exists a deterministic constant $L^*>0$ such that for every $x \in \HH^d$ and almost every realisation of $\xi$, we have
\begin{equation} \label{eq:MainThm}
    \lim_{t\rightarrow\infty} \frac{1}{t^{5/3}} \log u(t,x) = L^*\;.
\end{equation}
The value of $L^*$ is determined through the optimisation problem
\begin{equation} \label{eq:OptimalExp}
    L^* = \sup_{\theta \in [0,1], \, K>0} \; \Big\{ (1-\theta) \sqrt{2 (d-1) \sigma^2 K} - \frac{K^2}{4 \theta} \Big\}\;,
\end{equation}
where $\sigma^{2} \triangleq{\rm Var}[\xi(y)]$ is the variance of $\xi$. 
\end{thm}

\begin{rem} \label{rem:DelIC}
The solution $u(t,x)$ to \eqref{eq:PAMIntro} with constant initial condition $1$ is the same as the total mass (whole-space integral) of the solution to the same equation but with $\delta_x$-initial condition. Furthermore, by stationarity, $\big(u(t,x)\big)_{t \geqslant 0}$ has the same law for every $x \in \HH^d$. Hence, it suffices to consider $x=o$, the base point of $\HH^d$. 

The optimisation problem \eqref{eq:OptimalExp} can also be solved explicitly. Indeed, the optimal value $L^*$ and the pair of optimisers $(\theta^*, K^*)$ are given by
\begin{equation} \label{eq:optimal_values}
    L^* = \frac{3\cdot2^{4/3}}{5^{5/3}} \big( (d-1) \sigma^{2} \big)^{2/3}\;, \qquad \theta^* = \frac{1}{5}\;, \quad K^* = \frac{2^{5/3}}{5^{4/3}} \cdot \big( (d-1) \sigma^2 \big)^{1/3}\;.
\end{equation}
\end{rem}
Unlike the moment asymptotics, the almost sure behaviour revealed
by Theorem \ref{thm:MainThm} is very different from the Euclidean
situation. As its proof reveals, the PAM also undergoes a
stronger non-Euclidean localisation mechanism through its Feynman-Kac representation.

\subsection{Background and motivation}
\label{subsec:Background}

The study of the asymptotic behaviour of directed polymers in random environments have attracted much interest over the past decades in both discrete and continuous settings (cf. \cite{Com17, K\"on16} and the references therein). Mathematically, a random polymer is described by a random walk or a Brownian motion propagating through space where random rewards and penalties are distributed. One important question is to understand how the global behaviour of the polymer (the random walk or Brownian motion) is affected by the impurities of the random environment. A typical phenomenon is that the polymer would exhibit certain localisation property (e.g. favouring particular locations or trajectories) under the polymer measure (Gibbs measure). Such behaviour is intimately related to the (quenched) long time asymptotics of the so-called partition function, which is the solution to the parabolic Anderson model with potential function describing the random environment. 

The long time behaviour of PAM in Euclidean settings (over $\mathbb{Z}^{d}$ or $\mathbb{R}^{d}$ with various potentials) have been extensively
studied in the literature. The monograph \cite{K\"on16} contains an excellent exposition on various asymptotic results in the discrete setting. The annealed and quenched asymptotics for the continuous PAM with a regular Gaussian potential were studied in the seminal works \cite{CM95, GK00, GKM00}. The quenched asymptotics was extended to singular Gaussian potentials in \cite{Che14} and the white noise potential (in $d=2$) in \cite{KPvZ22}.

The main motivation for our work is to investigate a similar model in a different geometric setting and look for possible new non-Euclidean asymptotic behaviour and localisation mechanism. As an initial attempt, we work with the standard hyperbolic space where the global geometry and large scale behaviour of Brownian motion differ from the Euclidean situation in several fundamental ways. The underlying random environment is described by a time-independent, regular Gaussian
potential whose distribution is invariant under isometries. In the first part of our investigation \cite{GX25}, we established the annealed asymptotics for the solution to \eqref{eq:PAMIntro}. It turns out that the moment asymptotics is identical to the Euclidean result established in \cite{GK00} and the PAM exhibits the same moment intermittency property as in the Euclidean case. An interesting point there is that the fluctuation exponent is described by a variational problem induced by the \textit{Euclidean} (rather than hyperbolic) Laplacian. 

\medskip
\textit{Main novelty}. The goal of the present article
is to investigate the quenched asymptotics of the solution to \eqref{eq:PAMIntro}. Our main finding is that the almost sure growth of the solution, as revealed by (\ref{eq:MainThm}), is much faster
than the Euclidean situation where the growth rate was found to be (cf. \cite{CM95, GKM00})\begin{equation}
u(t,o)=\exp \big(\sqrt{2d \sigma^{2}} \, t\sqrt{\log t} + o(t\sqrt{\log t}) \big) \quad \text{as }t\rightarrow\infty.\label{eq:Euclidean}
\end{equation} The $t^{5/3}$-rate and the exact growth constant in \eqref{eq:MainThm} and \eqref{eq:OptimalExp} are determined through an explicit optimisation procedure from the competition between the reward from peak values of the Gaussian field and the cost of traveling to those regions. There is a new and interesting localisation mechanism for the Brownian motion in this situation: the Brownian motion over $[0,t]$ intends to reach the peak region of the Gaussian field \textit{over a definite ball} (precisely, with radius $K^{*}t^{4/3}$ where $K^{*}$ is an explicit number) \textit{at a definite proportion of time} (precisely, at $s=t/5$) and stay there for the rest of time. This localisation mechanism is different from the Euclidean situation (cf. Section \ref{sec:ComEuc} below for a more detailed comparison). We will discuss our methodology for proving this result as well as the main difficulties in Section~\ref{subsec:Heu} below. 

\medskip

\textit{Related works in non-Euclidean setting.} There are several related works on the behaviour of PAM in non-Euclidean spaces. In a closely related setting (discrete analogue of hyperbolic space), \cite{HKS21} established the quenched asymptotics on a Galton-Watson tree and \cite{HW23} established the annealed asymptotics on a regular tree. In both works, the random potential is assumed to have a double-exponential tail. Hence, the asymptotics theorems obtained there are quite different from Theorem~\ref{thm:MainThm} and the annealed behaviour presented in \cite{GX25}. 

In continuous setting, PAM with white noise potential and the analysis of the singular Anderson operator on Riemann surfaces (heat kernel estimates, spectral properties etc.) were studied in depth in a series of works \cite{DDD18, Mou22, BDM25}. The general framework for singular SPDEs in manifolds was established recently by \cite{HS23}. 

\cite{BCO24} considered PAM with a time-dependent (white in time and coloured in space) Gaussian potential on a Cartan-Hadamard manfold, where the authors obtained well-posedness as well as moment estimates for the solution. The role of non-positive curvature (and global geometry) also plays an essential role in the recent work \cite{CO25} where similar questions as in \cite{BCO24} were investigated. Other works related to the asymptotic behaviour of PAM with a time-dependent Gaussian potential in a geometric setting include e.g. \cite{TV02, BOT23, BCH24, COV24}. We should point out that the case of time-dependent potential is different from the time-independent case in several fundamental ways (the asymptotic results, methodology and underlying mechanism have very different nature in these two settings), and is in general much more challenging. The current work focus on the time-independent case.

\subsection{Outline of the proof}
\label{subsec:Heu}

In this section, we discuss how Theorem \ref{thm:MainThm} is proved
at a heuristic but rather non-rigorous level. We will also describe
the localisation mechanism for achieving the optimal growth rate $e^{L^{*}t^{5/3}}$ ($L^{*}$ is the exact constant given in (\ref{eq:MainThm})) and
compare such mechanism to the Euclidean situation. 

As mentioned in Remark~\ref{rem:DelIC}, by stationarity, we only need to consider $x=o$, the base point of $\HH^d$. The proof of Theorem~\ref{thm:MainThm} consists of two parts: 
\begin{equation} \label{eq:intro_low_upper}
    \underset{t\rightarrow\infty}{\underline{\lim}}\frac{1}{t^{5/3}}\log u(t,o)\geqslant L^{*} \quad \text{and} \quad \underset{t\rightarrow\infty}{\overline{\lim}}\frac{1}{t^{5/3}}\log u(t,o)\leqslant L^{*}\;, \quad \P-\text{a.s.}\;.
\end{equation}
The starting point is the Feynman-Kac formula to represent the solution $u$ to \eqref{eq:PAMIntro} as
\begin{equation} \label{eq:FKIntro}
u(t,o) = \EE_o \big[ e^{\int_{0}^{t}\xi(W_{s})ds} \big]\;,
\end{equation}
where $\EE_o$ is the expectation of the hyperbolic Brownian motion $W$ starting from $o$. 

We also need the sample functions of the Gaussian field $\xi$ admit the following almost-sure growth property:
\begin{equation}
\max_{x \in B(o,R)} \xi(x) \approx \sqrt{2(d-1) \sigma^{2} R}\ \ \ \text{as }R\rightarrow\infty.\label{eq:XiGrowthIntro}
\end{equation}
According to (\ref{eq:FKIntro}) and (\ref{eq:XiGrowthIntro}), it
is natural to expect that the growth of $u(t,o)$ is intimately related
to the quantitative competition between peak values of $\xi$ (reward)
and probabilities of reaching and staying at those peak regions by
the Brownian motion $W$ (cost). This viewpoint is robust for studying this
kind of models in various general settings (Euclidean / discrete /
other potentials).

\subsubsection{Heuristics on the optimal Brownian scenario and growth exponent}
\label{subsec:StratLow}

To prove the lower asymptotics in \eqref{eq:intro_low_upper}, we need to identify a particular Brownian scenario under which the growth rate $e^{L^{*}t^{5/3}}$ is attained. This Brownian scenario is expected to be the ``optimal'' one, and also illustrates the reason of the $t^{5/3}$ rate on the exponential as well as the value of the constant $L^*$.

Let $K(t)$ and $\theta(t)$ be parameters to be determined later
on ($K(t)\rightarrow\infty$ as $t\rightarrow\infty$ and $\theta(t)\in[0,t]$). On the ball $B(o,K(t))$, the Gaussian field $\xi$ achieves and maintain its maximal value \eqref{eq:XiGrowthIntro} (with $R = K(t)$) in a small neighbourhood around a (random) point $z^t$ near the boundary $\partial B(o,K(t))$. We consider the following scenario for the Brownian motion which potentially yields the main contribution to $u(t,o)$: 
\begin{equation} \label{eq:BM_scenario}
    \begin{split}
    W \; &\text{enters a small neighbourhood of} \; z^t \; \text{at time around} \; \theta(t)\\
    &\text{and stays there over the remaining time} \; [\theta(t), t]\;.
    \end{split}
\end{equation}
By considering the above scenario, and assuming for the moment that both the contribution of $\xi$ from the time $[0, \theta(t)]$ and the staying probability inside a small island in the remaining time $[\theta(t), t]$ are negligible, one can formally write down a lower bound
\begin{equation} \label{eq:FormalLow}
    \begin{split}
    u(t,o) &\gtrsim \big( 1 + o_t(1) \big)  \, e^{(t-\theta(t)) \cdot \sqrt{2(d-1) \sigma^{2} K(t)}} \times \PP_o \big( W_{\theta(t)} \approx z^t \big)\\
    &\approx \exp \bigg( \big( t - \theta(t) \big) \cdot \sqrt{2 \sigma^{2} (d-1) K(t)} \, - \, \frac{K^2(t)}{4 \theta(t)} \bigg)\;.
    \end{split}
\end{equation}
The exponential term in the first line comes from \eqref{eq:XiGrowthIntro} with $R = K(t)$, and the ``$\approx$'' in the second line follows from that the location $z^t$ is near the boundary of $\partial B(o, K(t))$ and the heat kernel estimate for hyperbolic Brownian motion to estimate $\PP_o \big( W_{\theta t} \approx z^t \big)$. 

By elementary calculations, one can see that the quantity inside the exponential of the right hand side above is positive and achieves the largest power in $t$ when $\theta(t)$ is proportional to $t$ and $K(t)$ is proportional to $t^{4/3}$. Therefore, one is led to taking 
\begin{equation*}
    \theta(t) = \theta t\;, \quad K(t) = K t^{4/3}
\end{equation*}
for $\theta \in [0,1]$ and $K>0$. With these choices, the quantity on the exponential in \eqref{eq:FormalLow} is precisely the right hand side of \eqref{eq:OptimalExp} for the pair $(\theta,K)$ (multiplied by $t^{5/3}$). One then optimises over $\theta \in [0,1]$ and $K>0$ to obtain the following potentially optimal Brownian scenario: 
\begin{equation} \label{eq:BM_scenario_optimal}
    \begin{split}
    \eE_t^* = \Big\{W \; &\text{travels almost like a geodesic to} \; z^t \; (\text{distance} \; K^* t^{4/3} \; \text{away from} \; o)\\
    &\text{in time} \; [0, \theta^* t]\,, \; \text{and stays there over the remaining time} \; [\theta^* t, t] \Big\}\;,
    \end{split}
\end{equation}
where the pair $(\theta^*, K^*)$ is given in \eqref{eq:optimal_values}. Under this scenario, one expects the lower bound
\begin{equation*}
    u(t,o) \geqslant  \EE_o \big[ e^{\int_{0}^{t}\xi(W_{s})ds}; \; \eE_{t}^{*}\big] \approx e^{L^{*}t^{5/3}}
\end{equation*}
for all sufficiently large $t$. 

We should point out that the negligibility of the integral $\int_{0}^{\theta^{*}t}\xi(W_{s})ds$ is non-trivial. Intuitively, by requiring the Brownian motion $W$ to reach the neighbourhood of $z^t$ at time $\theta^{*} t$, $W$ is forced to travel at a speed much faster than its normal speed. As a result, it intends to travel like a geodesic with high probability. This makes it reasonable to replace $W|_{[0,\theta^{*}t]}$ by the corresponding geodesic $\gamma^{t}$ connecting $o$ and the center of $B^{t}$. Once this is possible, it is then not hard to see that the integral $\int_{0}^{\theta^{*}t}\xi(\gamma_{s}^{t})ds$ is negligible since the geodesic $\gamma^{t}$ rarely passes through very negative values of $\xi$.

\subsubsection{The upper asymptotics}

The proof of the matching upper asymptotics in \eqref{eq:intro_low_upper} is more challenging. We need to analyse all possible Brownian scenarios and show that none of them (nor their total contribution) produces a growth larger than $\exp ( L^* \, t^{5/3} )$. Here are the two concrete difficulties: 

\begin{itemize}
    \item In the candidate optimal scenario \eqref{eq:BM_scenario_optimal}, the proportion of time spent before the Brownian motion reaching the maximum value of $\xi$ is $\theta^* > 0$. The scenario \eqref{eq:BM_scenario_optimal} gives only one particular trajectory of the Brownian motion, but in general there are many other possible routes. 
    
    \item The exponential growth of volume in hyperbolic space gives exponentially many local peak regions of $\xi$ (as opposed to polynomially many in the Euclidean situation). This allows many more possibilities for $W$ to travel through these local peaks. 
\end{itemize}

Due to the above two aspects, a direct argument like the Euclidean situation would lead to an upper bound of the form $e^{L t^{5/3}}$ for some $L$ strictly larger than $L^*$. In order to get the precise constant $L^*$, we need to rule out all the above possibilities. 

At this stage, the essential ingredient is a fine analysis of the field $\xi$, showing that its clusters of peak regions are \textit{locally sparse} (in the sense Proposition~\ref{prop:local_maxima_sparse}, and then Lemma~\ref{lem:ClusterProperty} as a version adapted to our PAM problem). Together with a two-scale clustering argument, this ensures that all ``reasonable'' Brownian trajectories going to the maximum value of $\xi$ without too much cost must spend most of the traveling times outside peak regions of $\xi$. This then gives the precise constant $L^*$ as the upper bound. A full quantitative analysis is carried out in Section~\ref{sec:MainUp}

\subsection{Comparison with the Euclidean situation}
\label{sec:ComEuc}

Apart from the $t^{5/3}$-growth rate, the localisation mechanism
in the hyperbolic space is also quite different from the Euclidean
situation. In the Euclidean case, the almost sure growth of the Gaussian field is given by
\begin{equation*}
    \max_{|x| \leqslant R} \xi(x) \approx \sqrt{2 d \sigma^2 \log R} \quad \text{as} \; R \rightarrow \infty\;.
\end{equation*}
If one applies the same scenario as in \eqref{eq:BM_scenario} in the Euclidean situation, one is led to the lower bound
\begin{equation} \label{eq:Euclidean_low}
    u(t,o) \gtrsim \exp \bigg( \big( t-\theta(t) \big) \cdot \sqrt{2 d \,\sigma^{2} \, \log K(t)} - \frac{K^2(t)}{4 \theta(t)} \bigg)\;.
\end{equation}
The main difference with the hyperbolic situation is that we have the factor $\sqrt{\log K(t)}$ instead of $\sqrt{K(t)}$. The maximum for the quantity inside the exponential of the right hand side above turns out to be
\begin{equation} \label{eq:EucMax}
\sqrt{2 d \sigma^2} \cdot t \sqrt{\log t} + o(t\sqrt{\log t})\;,
\end{equation}
which precisely corresponds to the quenched growth in \eqref{eq:Euclidean}. It can be attained by taking
\begin{equation} \label{eq:EucLoc}
\theta(t) = \frac{t}{(\log t)^{\alpha}}\;, \quad K(t)=\frac{Kt}{(\log t)^{\beta}}
\end{equation}
for any $\alpha, \beta>0$ with $\alpha-2\beta < \frac{1}{2}$ and any $K>0$ independent of $t$. The interpretation of the leading order quantity in \eqref{eq:EucMax} is simple: the Brownian motion travels to a location of distance $\oO(t)$ away in a very short time, and stays nearby to pick up the maximum value of $\xi$ ($\approx \sqrt{2 d \sigma^2 \log t}$) for essentially all the time in $[0,t]$. However, there are a few key differences between the hyperbolic and Euclidean situations.

\vspace{2mm}\noindent (i)
While the Euclidean localisation mechanism is somewhat flexible (the range of parameters $\alpha$, $\beta$ and $K$ in \eqref{eq:EucLoc} are flexible), the optimal Brownian scenario in the hyperbolic situation is essentially unique. In fact, the right hand side of \eqref{eq:FormalLow} is maximised with $\theta(t) = \theta^* t$ and $K(t) = K^* t$ with the unique pair $(\theta^*, K^*)$ in \eqref{eq:optimal_values}. 

\vspace{2mm}\noindent (ii)
In the Euclidean case, the Brownian motion takes essentially no time ($\theta(t)/t\rightarrow 0$) to reach the location where $\xi$ takes its maximum. The cost (probability) of reaching there in time $\theta(t)$ is negligible compared to the reward, and does not contribute to the growth of $u(t,o)$. The constant $\sqrt{2 d \sigma^{2}}$ in the asymptotics \eqref{eq:Euclidean} only reflects the maximum of $\xi$. In the hyperbolic situation, the Brownian motion needs to spend a non-trivial proportion of time ($\theta^* t = t/5$) to enter a neighbourhood where $\xi$ reaches its peak near the boundary of $B(o,K^{*}t^{4/3})$. The cost traveling there turns out to be comparable with the reward from taking maximum of $\xi$. Hence, there is a competition between the gain from the $\xi$-maximum and the cost of traveling there. The exact growth constant $L^*$ is determined from the compromise between the two quantities.

\vspace{2mm}\noindent (iii) As a consequence of (ii), the upper bound techniques developed in \cite{CM95, GKM00} do not apply directly in the hyperbolic case. In fact, both arguments over there (which are very different indeed) are quite global and do not reflect the Brownian localisation at $\xi$-islands since its cost is negligible. In the hyperbolic case, such a localisation has to enter the upper bound argument in an essential way. In other words, we need
to compare every generic Brownian scenario to the optimal one and show that none of them produces a larger growth. 

\vspace{2mm}We conclude this section with a few remarks on our results. 

\begin{rem}
One can consider the hyperbolic space with curvature $-\kappa$ (instead of just having $\kappa=1$). The curvature constant $\kappa$ will be reflected in the Gaussian field growth as well as the heat kernel estimate which is used to compute probability costs. The aforementioned constants $L^*,K^*$ will both depend on $\kappa$. An interesting point is that the constant $\theta^*$ is always $1/5$ regardless of what $\kappa$ is. It is also interesting to note  that $L^*$ is non-linear with respect to the strength of the Gaussian field (its standard variance $\sigma$) in contrast to the Euclidean case.
\end{rem}

\begin{rem}
The proof of Theorem \ref{thm:MainThm} implicitly gives a localisation phenomenon for the Brownian motion under the Gibbs measure: with high (Gibbs) probability, $W$ reaches a particular location near $\partial B(o,K^{*}t^{4/3})$ at time $\theta^* t$ and stays there for the rest of time. On the other hand, it is not clear whether the PAM is \textit{completely localised}, i.e. does there exist a $\xi$-measurable site $Z(t)$ such that $v(t,Z_t)/u(t,o) \rightarrow 1$ in probability? Here $v$ denotes the solution to the PAM with $\delta_o$-initial condition and recall from Remark \ref{rem:DelIC} that $u(t,o)=\int_{\mathbb{H}^d}v(t,x)dx$.
\end{rem}

\begin{rem}
The choice of $\HH^d$ with constant negative curvature is mainly for technical convenience. The exponential volume growth plays an essential role, but in addition to that, the general mechanism should be quite robust with respect to fine geometric properties of the underlying space. Therefore, we expect that the methods developed in the current work can be utilised and extended to establish (sharp) asymptotic estimates for similar kind of models in more general geometric settings. 
\end{rem}
%

\subsection{Further questions}

The current article (along with the first part \cite{GX25}) provides
a simple initial attempt towards a deeper understanding on the fine
asymptotic behaviour of PAM in non-Euclidean settings. We mention a few natural questions for future investigation. 

\begin{enumerate}
\item \textit{Quenched fluctuation asymptotics.} Since the
precise localisation regime is identified in our methodology, the
fluctuation behaviour might be described in a suitably rescaled picture
with respect to the current localisation. It could be the case that
the fluctuation exponent is again determined by the Euclidean Laplacian
in a similar fashion as in the annealed asymptotics.

\item \textit{More singular Gaussian potentials.} In Euclidean situation, \cite{Che14, KPvZ22} revealed that singularity of the Gaussian potential already changes the quenched longtime behaviour of PAM (the power of $\log t$ in \eqref{eq:Euclidean} depends on the singularity of the noise). The case of white noise (in $d=2$ hyperbolic space) will be most interesting and challenging (the case for Euclidean situation was thoroughly studied in \cite{CvZ21, KPvZ22}). We do expect a different behaviour than Theorem~\ref{thm:MainThm} in the hyperbolic setting, and it is worth for further investigation. 

\item \textit{Time-dependent potentials} (e.g. white in time and coloured in space). In general, questions related to annealed and quenced asymptotics, fluctuation asymptotics, description of localisation vs delocalisation mechanism (in low vs high temperature) and the large scale behaviour of Brownian motion under Gibbs measure are widely open in continuous geometric settings. 
\end{enumerate}

\subsection*{Organisation of the article}

The rest of the article is organised as follows. In Section~\ref{sec:hyperbolic_BM}, we give preliminaries on the hyperbolic heat kernel estimates as well as some properties of the hyperbolic Brownian motion. In Section~\ref{sec:field_as}, we give almost sure properties of the random field $\xi$. All these will be used extensively in later sections. In Sections~\ref{sec:MainLow} and~\ref{sec:MainUp}, we give detailed proofs of the lower and upper asymptotics in \eqref{eq:intro_low_upper} respectively. They together imply Theorem~\ref{thm:MainThm}. Finally in Appendix~\ref{sec:LDP}, we prove a large deviations estimate that is used in the proof of the lower bound in Section~\ref{sec:MainLow}.

\subsection*{Notation}

We fix the base point (origin) of the hyperbolic space, denoted by $o$. For $z \in \HH^d$ and $r>0$, let $B(z,r)$ denote the $r$-neighbourhood of $z$ in $\HH^d$. We write
\begin{equation*}
    Q_r \triangleq B(o, r)\;.
\end{equation*}
We also write
\begin{equation} \label{eq:mu_0}
    \mu_0 \triangleq \sqrt{2(d-1) \sigma^2}\;,
\end{equation}
which is used throughout the article. We use $\P$ and $\E$ to denote the probability and expectation with respect to the randomness of $\xi$, and use $\PP_x$ and $\EE_x$ to denote the probability and expectation with respect to the randomness of the hyperbolic Brownian motion $W$ starting from $x \in \HH^d$. 

For two nonnegative functions $f$ and $g$, we write $f \asymp g $ if there exist universal constants $c, C>0$ such that
\begin{equation} \label{eq:equivalence_notation}
    c g(\cdot) \leqslant f(\cdot) \leqslant C g(\cdot)
\end{equation}
uniformly over the domain where $f$ and $g$ are defined.

\section{Hyperbolic Brownian motion and heat kernel}
\label{sec:hyperbolic_BM}

We list a few preliminary tools that will be used frequently in the sequel. The reader is referred to \cite[Section 2]{GX25} for a more general review. Throughout the rest of this article, we fix a base point
$o\in\mathbb{H}^{d}.$ We will use $Q_{R}$ to denote the geodesic ball of radius $R$ centered at $o$.

\subsubsection*{Volume form and heat kernel estimates}

The exponential volume growth of $Q_R$ as $R\rightarrow\infty$ plays an essential role in our study. The volume form on $\mathbb{H}^d$ under geodesic polar coordinates is given by
\begin{equation*}
    \mathrm{vol} \, (d \rho \, d \sigma) = (\sinh \rho)^{d-1} \, d \rho \, d \sigma\;.
\end{equation*}
The following volume estimate will be used throughout the article. 

\begin{lem} \label{lem:volume}
    We have the two sided estimates
    \begin{equation} \label{eq:volume_small}
        \vol (Q_r) \asymp_d r^d
    \end{equation}
    for $r \in (0,1)$, and
    \begin{equation} \label{eq:volume_large}
        \vol (Q_R) \asymp_d e^{(d-1) R}
    \end{equation}
    over all $R \geqslant 1$. The proportionality constants depend on the dimension $d$ only. 
\end{lem}
\begin{proof}
    These are immediate consequences of the expression of the volume form. 
\end{proof}

The following two-sided uniform estimate for the hyperbolic heat kernel will also be important to us (cf. \cite[Theorem 3.1]{DM98}).

\begin{lem}
\label{lem:HKEst}
Let $p(s,x,y)$ be the heat kernel on $\mathbb{H}^{d}$. We have the two-sided estimate
\begin{align} \label{eq:HKEst}
p(s,x,y) \asymp_d &\  s^{-\frac{d}{2}} \exp \Big[-\frac{(d-1)^{2}s}{4}-\frac{d^2 (x,y)}{4s} - \frac{(d-1) \, d(x,y)}{2} \Big]\nonumber \\
 & \ \ \ \times \big( 1+ d(x,y) +s \big)^{\frac{d-3}{2}} \, \big( 1 + d(x,y) \big),
\end{align}
where we recall from \eqref{eq:equivalence_notation} that $\asymp$ means both sides of \eqref{eq:HKEst} are bounded by a constant multiple of the other uniformly over $s>0$ and $x,y \in \HH^d$. 

In particular, we have
\begin{equation} \label{eq:HKEst_small}
    p(s,x,y) \asymp_d s^{-\frac{d}{2}} e^{-\frac{d^2(x,y)}{4s}}
\end{equation}
over all $s \leqslant 1$ and all $x,y \in \HH^d$ such that $d(x,y)$ is uniformly bounded, and that
\begin{equation} \label{eq:HKEst_large}
    e^{-\frac{d^2(x,y)}{4s} - c d(x,y)} \lesssim_d p(s,x,y) \lesssim_d e^{-\frac{d^2(x,y)}{4s} + c d(x,y)}
\end{equation}
uniformly over all $s \geqslant 1$ and $x,y \in \HH^d$ such that $d(x,y) \gg s \gg 1$. All proportionality constants depend on the dimension $d$ only. 
\end{lem}

\subsubsection*{Hyperbolic Brownian motion }

The Brownian motion on $\mathbb{H}^{d}$ is the Markov family $\{W_{t}^{x}:t\geqslant0\}$
($x\in M$ denotes its starting point) generated by $\Delta$. Its
behaviour differs from the Euclidean Brownian motion in several fundamental
ways. For instance, it travels a distance of order $t$ when $t$
is large (more precisely, $\frac{d(W_{t}^{x},x)}{(d-1)t}\rightarrow1$
a.s. as $t\rightarrow\infty$) and its angular component with respect
to the starting point $x$ converges to a definite limiting angle
(cf. \cite{Sul83}). In other words, the Brownian motion converges a.s. to a (random) limiting point on the boundary at infinity (which is homeomorphic to the sphere $S^{d-1}$).

The following lemma gives an SDE description for the radial process $R_u \triangleq d(W_u^{o},o)$. It is a direct consequence of the explicit expression of $\Delta$ under geodesic polar coordinates
with respect to $o$.

\begin{lem} \label{lem:RadialSDE}
The process $\rho_u \triangleq d(W_u^{o},o)$ satisfies the SDE 
\begin{equation} \label{eq:RadialSDE}
d \rho_u = \sqrt{2} d \beta_u + (d-1) \coth \rho_u du\;,
\end{equation}
where $\beta_{t}$ is a one-dimensional Euclidean Brownian motion. 
\end{lem}

%


The following proposition will be frequently used in Section~\ref{sec:MainUp}. 

\begin{prop} \label{prop:hitting_time}
    Let $\phi: \RR \rightarrow \RR$ be a non-increasing function. For every $\Theta > 0$, define the stopping time $\sigma = \sigma (\Theta)$ by
    \begin{equation*}
        \sigma := \inf \left\{ u>0: R_u = \Theta \right\}\;,
    \end{equation*}
    where $R_u \triangleq d(W_u^o, o)$. Then for every $\alpha \in (0,1)$, we have
    \begin{equation} \label{eq:hitting_time}
        \EE_o \Big[ \phi(\sigma) \1_{\{\sigma < s\}} \Big] \leqslant \frac{\alpha \Theta}{2 \sqrt{\pi}} \int_{0}^{s} u^{-3/2} e^{-\frac{\alpha^2 \Theta^2}{4u}} \phi (u) du
    \end{equation}
    for all sufficiently large $\Theta$ and $s$ (depending on $\alpha$) satisfying $\Theta >s \log s$. In particular, by taking $\phi \equiv 1$, we have
    \begin{equation} \label{eq:ExitEst}
        \PP_o \big( \sigma(\Theta) < s \big) \leqslant  e^{- c \Theta^2 / s}
    \end{equation}
    for some $c>0$ independent of $\Theta$ and $s$. 
\end{prop}
\begin{proof}
We give details for \eqref{eq:hitting_time}. The bound \eqref{eq:ExitEst} follows from \eqref{eq:hitting_time} immediately. Let $f:[0,\infty)\rightarrow\mathbb{R}$ be a non-decreasing, smooth function such that 
\begin{equation*}
    f(x)=\frac{1}{2} \; \text{on } \big[0,\frac{1}{3} \big]\;, \quad f(x)=x\ \text{on [1,\ensuremath{\infty)}}\;.
\end{equation*}
Hence, for $\Theta > 1$, $\sigma$ is also the hitting time of $\Theta$ for the process $f(R_u)$. We further assume $f$ is chosen such that $\|f'\|_{L^\infty} \leqslant 1$. Applying It\^o's formula to $f(\rho_u)$, we get 
\begin{align*}
f(\rho_u) & =\frac{1}{2} + \sqrt{2}\int_{0}^{u} f'(\rho_v) d \beta_v + \int_{0}^{u} \big((d-1)f'(\rho_v) \coth \rho_v + f''(\rho_v)\big) dv\\
 & =:\frac{1}{2} + \sqrt{2} M_u + Q_u\;.
\end{align*}
Note that $f'(\rho_v) = 0$ for $\rho_v < \frac{1}{2}$ and $\coth \rho_v$ is uniformly bounded for $\rho_v \geqslant \frac{1}{2}$. Hence, there exists $C>0$ depending on the choice of $f$ and dimension $d$ only such that $|Q_u| \leqslant C u$ for all $u >0$. As a consequence, for every $\alpha \in (0,1)$, we have
\begin{equation*}
    \PP \Big( \sup_{u \in [0,s]} f(\rho_u) > \Theta \Big) \leqslant \PP \Big( \sqrt{2} \sup_{u \in [0,s]} M_u > \Theta - \frac{1}{2} - C s \Big) \leqslant \PP \Big( \sqrt{2} \sup_{u \in [0,s]} M_u > \alpha \Theta \Big)
\end{equation*}
for all sufficiently large $\Theta \gg Cs$ (also depending on $\alpha$). Since the quadratic variation of $M$ satisfies
\begin{equation*}
    \langle M \rangle_u = \int_{0}^{u} \big( f'(\rho_v) \big)^2 dv \leqslant u\;,
\end{equation*}
and $(M_u)_{u \geqslant 0} \stackrel{\text{law}}{=} \big( B_{\langle M \rangle_u} \big)_{u \geqslant 0}$ for a standard Euclidean Brownian motion $B$ independent of $M$, we deduce
\begin{equation*}
    \PP \Big( \sqrt{2} \sup_{u \in [0,s]} M_u > \alpha \Theta \Big) = \widetilde{\PP} \Big( \sup_{u \in [0,s]} B_{\langle M \rangle_u} > \frac{\alpha \Theta}{\sqrt{2}} \Big) \leqslant \widetilde{\PP} \Big( \sup_{u \in [0,s]} B_u > \frac{\alpha \Theta}{\sqrt{2}} \Big)\;,
\end{equation*}
where $\widetilde{\PP}$ denotes the law of the standard Euclidean Brownian motion. In particular, this implies that the random variable
\begin{equation*}
    \widetilde{\sigma} := \inf \Big\{ u>0: B_u > \frac{\alpha \Theta}{\sqrt{2}} \Big\}
\end{equation*}
stochastically dominates $\sigma$ if $\Theta$ is sufficiently large, and hence
\begin{equation*}
    \EE \Big[ \phi(\sigma) \1_{\{\sigma < s\}} \Big] \leqslant \widetilde{\EE} \Big[ \phi(\widetilde{\sigma}) \1_{\{\widetilde{\sigma} < s\}} \Big]
\end{equation*}
since $\phi(\cdot) \1_{\{\cdot < s\}}$ is non-increasing. Finally, note that the density of the hitting time $\widetilde{\sigma}$ has the form
\begin{equation*}
    \widetilde{\PP} \big( \widetilde{\sigma} \in du \big) = \frac{\alpha \Theta}{2\sqrt{\pi}} \cdot u^{-3/2} \cdot e^{-\frac{\alpha^2 \Theta^2}{4u}} du\;.
\end{equation*}
The claim then follows. 
\end{proof}



\section{Almost sure asymptotics of $\xi$ on $\HH^d$}
\label{sec:field_as}

In this section, we develop some almost sure asymptotic behavior of the Gaussian field $\xi$ on $\HH^d$ that will be essentially used in the sequel. We first give the concept of (maximal) $r$-net.

\begin{defn} \label{def:net}
Let $E$ be a compact subset of a metric space $(X,d)$, and $r>0$. We say a finite set of points $\nN \subset E$ is an \textit{$r$-net} of $E$ if $d(z, z') > r$ for every $z, z' \in \nN$. 

$\nN$ is a \textit{maximal $r$-net} of $E$ if in addition, for any $x \in E$, there exists $z \in \nN$ such that $d(z, x) < r$. 
\end{defn}


\subsection{Sample function growth}
\label{subsec:XiGrowth}

We recall the following well-known result (cf. \cite[Theorem 2.1]{Adl90}).

\begin{lem} [Borell's inequality]
Let $\{X_{\alpha}: \alpha \in \aA\}$ be a centered Gaussian field with $\sup_{\alpha \in \aA} |X_\alpha| < +\infty$ almost surely. Then we have 
\begin{equation} \label{eq:Borell}
\P \Big(\sup_{\alpha \in \aA}|X_\alpha| > \Lambda \Big) \leqslant  4 \, \exp \bigg[ -\frac{1}{2 \sigma_\aA^2} \Big( \Lambda - \E \sup_{\alpha \in \aA} X_\alpha \Big)^2 \bigg]
\end{equation}
for all $\Lambda>\E[\sup_{\alpha \in A} X_\alpha]$, where $\sigma_\aA^{2}\triangleq\sup_{\alpha \in \aA}\E[X_\alpha^2]$. 
\end{lem}

The following lemma gives the growth of the Gaussian field $\xi$. It plays a prominent role in the proof of our main theorem.

\begin{lem} \label{lem:XiGrowth}
Recall $Q_{R}$ denotes the ball of radius $R$ centered at the origin $o$, and that $\mu_0 = \sqrt{2(d-1) \sigma^2}$ from \eqref{eq:mu_0}. Then $\P$-almost surely, we have
\begin{equation} \label{eq:XiUp}
\underset{R\rightarrow\infty}{\overline{\lim}} \, \Big( \frac{1}{\sqrt{R}} \max_{x \in Q_R} |\xi(x)| \Big) \leqslant \mu_0.
\end{equation}
\end{lem}
\begin{proof}
For each $R \geqslant 10$, let $\nN_R$ be a $1$-net of $Q_R$. By Definition~\ref{def:net}, the union $\cup_{z \in \nN_R} B(z,1)$ covers $Q_R$. Hence, we have
\begin{equation} \label{eq:Xi_growth_intermediate}
    \P \big(\max_{x \in Q_{R}} |\xi(x)|> \Lambda \big) \leqslant \sum_{z \in \nN_R} \P \big(\max_{x \in B(z, 1)} |\xi(x)|> \Lambda \big) \leqslant |\nN_R| \cdot \P \big(\max_{x\in Q_1} |\xi(x)|> \Lambda \big)\;,
\end{equation}
where $|\nN_R|$ denotes the cardinality of the set $\nN_R$, and the last equality follows from the stationarity of $\xi$. 

Again by Definition~\ref{def:net}, the balls $\big\{ B(z, 1/2) \big\}_{z \in \nN_R}$ are disjoint and are all contained in $Q_{R+1}$. This immediately implies $|\nN_R| \leqslant C_d \, e^{(d-1)R}$ for some constant $C_d$ depending on $d$ only. Now, applying Borell's inequality \eqref{eq:Borell} to the probability on the right hand side above and using $|\nN_R| \leqslant C_d \, e^{(d-1)R}$, we obtain
\begin{equation} \label{eq:XiGrowthPf1}
\P \big( \max_{x\in Q_R} |\xi(x)| > \Lambda \big) \leqslant 4 C_d \, \exp \Big( (d-1) R - \frac{(\Lambda - M)^2}{2 \sigma^2} \Big)
\end{equation}
for all $\Lambda > M \triangleq \E \big[\max_{x\in Q_1}\xi(x)\big]$. 

Let $\mu > \mu_0$ and take $\Lambda = \Lambda_R = \mu \sqrt{R}$, where $R$ is chosen sufficiently large so that $\mu_0 \sqrt{R} > M$. Then the exponent on the right hand side of \eqref{eq:XiGrowthPf1} satisfies
\begin{equation*}
    (d-1)R - \frac{1}{2\sigma^{2}} (\Lambda_R - M)^{2} = -\frac{1}{2\sigma^2} \big( \mu^2 - \mu_0^2 \big) R + O(\sqrt{R}).
\end{equation*}
It then follows from the Borel-Cantelli lemma that with probability one, we have
\begin{equation*}
    \max_{x \in Q_N} |\xi(x)| < \mu \sqrt{N}
\end{equation*}
for all sufficiently large integers $N$. For general $R$, we have
\begin{equation*}
    \max_{x \in Q_R} |\xi(x)| \leqslant \max_{x \in Q_{\lfloor R \rfloor + 1}} |\xi(x)| < \mu \sqrt{\lfloor R \rfloor + 1}
\end{equation*}
for all sufficiently large $R$, where $\lfloor R \rfloor$ denotes the largest integer less than or equal to $R$. This completes the proof of Lemma~\ref{lem:XiGrowth}. 
\end{proof}

The following lemma is also a direct consequence of  Borell's inequality
\eqref{eq:Borell}. 

\begin{lem} \label{lem:XiTail}
For each $r>0$, there exists a positive constant $c_r$ such that 
\begin{equation*}
    \PP \Big(\sup_{x\in Q_r} \xi(x) > \Lambda \Big) \leqslant e^{-c_r \Lambda^{2}}
\end{equation*}
for all $\Lambda>0.$ 
\end{lem}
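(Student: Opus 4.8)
The plan is to derive this directly from Borell's inequality (\ref{eq:Borell}), the only real issue being the regime $\lambda\downarrow0$, where that bound is vacuous. Write $M_R\triangleq\sup_{x\in Q_R}\xi(x)$. First I would record the routine preliminaries: since $C\in C^2$ the field $\xi$ has a.s.\ continuous sample paths, so $M_R<\infty$ a.s.\ and, by the standard integrability of Gaussian suprema (Fernique), $m_R\triangleq\mathbb{E}[M_R]<\infty$; moreover $\sigma_T^2=\sup_{x\in Q_R}\mathbb{E}[\xi(x)^2]=C(0)=\sigma^2$ by stationarity. Applying (\ref{eq:Borell}) to $\{\xi(x)\}_{x\in Q_R}$ and using $\{M_R>\lambda\}\subseteq\{\sup_{Q_R}|\xi|>\lambda\}$ then gives
\[
\mathbb{P}(M_R>\lambda)\leqslant4\exp\!\Big(-\frac{(\lambda-m_R)^2}{2\sigma^2}\Big)\qquad(\lambda>m_R).
\]
Next I would fix a threshold, say $\lambda_*\triangleq\max\{2m_R,\,4\sigma\sqrt{\log 4}\}$: for $\lambda\geqslant\lambda_*$ one has $(\lambda-m_R)^2\geqslant\lambda^2/4$ and the prefactor $4$ is absorbed, so $\mathbb{P}(M_R>\lambda)\leqslant e^{-\lambda^2/(16\sigma^2)}$ on $[\lambda_*,\infty)$.

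For $\lambda\in(0,\lambda_*)$ the above is useless, and here the key step is to show that
\[
p_R\triangleq\mathbb{P}(M_R>0)=1-\mathbb{P}\big(\xi\leqslant0\text{ on }Q_R\big)<1 ,
\]
since then $\mathbb{P}(M_R>\lambda)\leqslant p_R$ for every $\lambda>0$ (as $\{M_R>\lambda\}\subseteq\{M_R>0\}$) and only a bounded range remains. To prove $p_R<1$ I would invoke the support theorem for Gaussian measures: the law of $\xi|_{Q_R}$ on $C(Q_R)$ has topological support equal to the closure of its Cameron--Martin space, and by stationarity this closure contains the constant functions --- for instance, averaging $C(d(\cdot,y))$ over $y$ in a large ball $Q_{R'}$ ($R'\gg R+R_0$) produces a function on $Q_R$ uniformly close to the constant $\int_{\mathbb{H}^d}C(d(o,y))\,dy$, which is nonzero (it equals $\|\phi\|_{L^1}^2>0$ in the mollified white-noise construction of $\xi$). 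Hence the support contains some $h$ with $h<-\tfrac12$ on $Q_R$, so $\mathbb{P}(\sup_{Q_R}|\xi-h|<\tfrac14)>0$ and on that event $M_R<-\tfrac14<0$, giving $p_R<1$.

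Finally I would assemble the pieces by taking $C_R\triangleq\min\{1/(16\sigma^2),\ \log(1/p_R)/\lambda_*^2\}>0$: for $\lambda\geqslant\lambda_*$ the bound $\mathbb{P}(M_R>\lambda)\leqslant e^{-\lambda^2/(16\sigma^2)}\leqslant e^{-C_R\lambda^2}$ follows from the first step, while for $0<\lambda<\lambda_*$ we have $\mathbb{P}(M_R>\lambda)\leqslant p_R=e^{-\log(1/p_R)}\leqslant e^{-C_R\lambda_*^2}\leqslant e^{-C_R\lambda^2}$. The main obstacle in this scheme is exactly the strict inequality $p_R<1$: everything else is Borell's inequality together with elementary book-keeping, whereas ruling out $\mathbb{P}(M_R>\lambda)\to1$ too fast as $\lambda\downarrow0$ genuinely needs a non-degeneracy input on the Gaussian field (the support/Cameron--Martin argument above, or equivalently a feature of the concrete construction of $\xi$).
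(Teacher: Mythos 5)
The paper offers no written proof here --- it merely asserts that the lemma is ``also a direct consequence of Borell's inequality.'' Your proof uses the same engine (Borell applied to $\xi|_{Q_R}$), and the large-$\lambda$ part is exactly what the paper implicitly intends. You also correctly identify the one genuine subtlety that the paper glosses over: the bound (\ref{eq:Borell}) only applies for $\lambda>\mathbb{E}[\sup_{Q_R}\xi]$, so the clause ``for all $\lambda>0$'' in the statement requires a separate nondegeneracy input on a bounded range $(0,\lambda_*)$, which you reduce to $p_R:=\mathbb{P}(\sup_{Q_R}\xi>0)<1$.

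That said, you were right to flag this step as the crux: as written it has a gap. Your argument for $p_R<1$ ultimately rests on $c_0:=\int_{\mathbb{H}^d}C(d(o,y))\,\mathrm{vol}(dy)\neq0$, and you verify this only under the mollified-white-noise construction. Assumption~\ref{ass:xi} by itself does not force $c_0\neq0$: there are $C^2$, compactly supported, rotation-invariant positive-semidefinite covariances with $c_0=0$ (built, e.g., from a mean-zero mollifier, so that the spherical transform of $C$ vanishes at the bottom of the spectrum), and for such $C$ your averaging over $Q_{R'}$ yields only the zero function on $Q_R$, so the Cameron--Martin argument no longer produces a uniformly negative element of the support. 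Thus the small-$\lambda$ half of your proof implicitly strengthens the standing hypotheses, or else needs a different nondegeneracy argument. In context this is harmless: Lemma~\ref{lem:XiTail} is only ever applied with $\lambda=\delta t^{2/3}\to\infty$ (in the proof of Lemma~\ref{lem:ClusterProperty}), so the ``for all $\lambda>0$'' phrasing is a mild overstatement in the paper, and only the large-$\lambda$ regime --- where your Borell argument is exactly right --- actually matters downstream.
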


\subsection{\label{subsec:XiPeak}Peaks near the boundary}

Next, we show Lemma~\ref{lem:XiGrowth} is sharp. More precisely, there is a (random) location $z_{*}$ near the boundary of $Q_{R}$ where $\xi$ maintains its peak value $\mu_0 \sqrt{R} + o(\sqrt{R})$ near $z_*$. We first introduce the following basic fluctuation estimate. 

\begin{lem} \label{lem:XiTube}
For every $a, b > 0$, we have
\begin{equation*}
    \P \Big( \max_{\|x\| \leqslant b \, h^{-1/3}} \big|\xi(x)-h \big|<a\sqrt{h} \Big) \geqslant \frac{1}{3} \cdot e^{-\frac{h^2}{2\sigma^2} + \frac{\sqrt{h}}{\sigma^2}}
\end{equation*}
for all sufficiently large $h$ (depending on $a$ and $b$), where we write $\|x\| = d(x,o)$ for simplicity. 
\end{lem}
\begin{proof}
This is essentially \cite[Lemma 3.3]{GKM00}. For every $h>0$, let $\P_h$ be the measure given by
\begin{equation} \label{eq:change_of_measure}
    \frac{d \P_{h}}{d \P} \triangleq e^{\frac{h}{\sigma^2} \cdot \xi(o) - \frac{h^2}{2 \sigma^2}}.
\end{equation}
Then under $\P_h$, the field $\xi_h(x) \triangleq \xi(x) - \frac{C( d(x,o) )}{\sigma^2} \cdot h$ has the same law as $\xi$ under $\P$ -- a centered Gaussian field with covariance function $C$. 

Note that the covariance function $C = C(\|x\|) = C\big( d(x,o) \big)$ is twice continuously differentiable at the origin, and satisfies $C(0) = \sigma^2$ and $C'(0) = 0$. Hence, there exists $L>0$ depending on the covariance function only such that
\begin{equation} \label{eq:field_translation}
    |\xi(x) - h| = \left| \xi_h(x) + \Big( \frac{C(\|x\|)}{\sigma^2} - 1 \Big) \cdot h \right| \leqslant |\xi_h(x)| + L \|x\|^2 h \leqslant |\xi_h(x)| + L \, b^2 h^{1/3}
\end{equation}
uniformly over $x$ with $\|x\| = d(x,o) < b h^{-1/3}$. Let
\begin{equation*}
    G_{h} = \Big\{ \max_{\|x\| \leqslant b \, h^{-1/3}} |\xi_h(x)| \leqslant a \sqrt{h} / 2 \Big\}\;.
\end{equation*}
Note that $G_h$ also depends on the parameters $a$ and $b$, but we omit them for notational simplicity. Take $h$ sufficiently large such that $L b^2 h^{1/3} < \frac{a \sqrt{h}}{2}$. Then by \eqref{eq:field_translation} and the change-of-measure \eqref{eq:change_of_measure}, we have
\begin{equation*}
    \P \Big( \max_{\|x\| \leqslant b \, h^{-1/3}} \big|\xi(x)-h \big|<a\sqrt{h} \Big) \geqslant \P ( G_h ) = e^{-\frac{h^2}{2 \sigma^2}} \int_{G_h} e^{-\frac{h}{\sigma^2} \cdot \xi_h(o)} d \P_h\;.
\end{equation*}
By further restricting to the event $\{\xi_h(o) < - 1/ \sqrt{h}\}$ and using $\text{Law}_{\P_h} (\xi_h) = \text{Law}_\P (\xi)$, we can bound the integral on the right hand side above (from below) by
\begin{equation*}
    \begin{split}
    \int_{G_h} e^{-\frac{h}{\sigma^2} \cdot \xi_h(o)} d \P_h &\geqslant e^{\frac{\sqrt{h}}{\sigma^2}} \cdot \P_h \big( G_h \cap \{\xi_h(o) < - 1/\sqrt{h}\} \big)\\
    &\geqslant e^{\frac{\sqrt{h}}{\sigma^2}} \cdot \Big( \P_h \big( \xi_h(o) < - 1/ \sqrt{h} \, \big) - \P_h (G_h^c) \Big)  \geqslant \frac{1}{3} \, e^{\frac{\sqrt{h}}{\sigma^2}}
    \end{split}
\end{equation*}
for sufficiently large $h$. The claim then follows. 
\end{proof}

The main result of this subsection is the following. 

\begin{prop} \label{prop:XiPeak}
With $\P$-probability one, for every $b>0$ and all sufficiently large $R$, there exists $z_*$ with $d(z_*, \, o) \in (R-\sqrt{R}, R-1)$ such that 
\begin{equation} \label{eq:XiPeak}
\min_{x \in B(z_*, \, b R^{-1/6})} \xi(x) \geqslant \mu_0 \sqrt{R} - 2 ( \mu_0^2 R )^{1/4}\;.
\end{equation}
\end{prop}
\begin{proof}
Recall that $r_0$ is the correlation length of the field $\xi$. For each $R> r_0$, let $\nN_{2 r_0}^R$ be a maximal $(2 r_0)$-net of the annulus
\begin{equation*}
    \aA_R \triangleq \big\{ z \in \HH^d: \; R - \sqrt{R} < d(z,o) < R-1 \big\}\;.
\end{equation*}
Consider the event
\begin{equation*}
    \eE_R \triangleq \bigcup_{z \in \nN_{2 r_0}^R} \left\{ \min_{x \in B(z, \, b R^{-1/6})} \xi(x) > \mu_0 \sqrt{R} - 2 ( \mu_0^2 R )^{1/4}\right\}.
\end{equation*}
Since points in $\nN_{2 r_0}^R$ are at least $2 r_0$ away from each other, the events in the union above are all independent. Furthermore, they all have the same probability by stationarity of $\xi$. Hence, the probability of the complement event $\eE_R^c$ satisfies
\begin{equation} \label{eq:PfKeyLem2}
    \begin{split}
    \P(\eE_R^c) &= \bigg( 1 - \P \Big( \min_{\|x\| \leqslant b \, R^{-1/6}} \xi(x) >\mu_0 \sqrt{R} - 2 (\mu_0^2 R)^{\frac{1}{4}} \Big) \bigg)^{|\nN_{2 r_0}^R|}\\
    &\leqslant \exp \bigg( - |\nN_{2 r_0}^R| \cdot \P \Big( \min_{\|x\| \leqslant b \, R^{-1/6}} \xi(x) >\mu_0 \sqrt{R} - 2 (\mu_0^2 R)^{1/4} \Big) \bigg)\;,
    \end{split}
\end{equation}
where we have used $1-q \leqslant e^{-q}$ in the second line above. We now control from below the two terms inside the exponential above. For the probability term, by Lemma~\ref{lem:XiTube} with $h = \mu_0 \sqrt{R}$, we have
\begin{equation} \label{eq:PfKeyLem2'}
    \begin{split}
    &\phantom{111}\P \Big( \min_{x \in B(o, \, b \, R^{-1/6})} \xi(x) >\mu_0 \sqrt{R} - 2 (\mu_0^2 R)^{1/4} \Big)\\
    &\geqslant \P \Big( \max_{x \in B(o, \, b \, R^{-1/6})} |\xi(x) - \mu_0 \sqrt{R}| <2 (\mu_0^2 R)^{1/4} \Big) \gtrsim e^{-(d-1) R + c R^{1/4}}\;,
    \end{split}
\end{equation}
where we used $\mu_0^2 = 2 (d-1) \sigma^2$ to get the precise term $-(d-1) R$ on the exponential. As for the cardinality of $\nN_{2 r_0}^R$, by Definition~\ref{def:net}, $\cup_{z \in \nN_{2 r_0}^R} B(z, 2 r_0)$ covers the annulus $\aA_R$. Hence, we have the lower bound
\begin{equation} \label{eq:PfKeyLem3}
    |\nN_{2 r_0}^R| \geqslant {\rm vol}(\aA_R) / {\rm vol}(Q_{2 r_0}) \geqslant c' e^{(d-1) R}\;,
\end{equation}
where we also used the volume estimate \eqref{eq:volume_large} from Lemma~\ref{lem:volume}. Plugging \eqref{eq:PfKeyLem2'} and \eqref{eq:PfKeyLem3} back into \eqref{eq:PfKeyLem2} and taking exponential, we obtain
\begin{equation*}
    \P \big( \eE_R^c \big) \leqslant \exp \big( - c_1 e^{c_2 R^{1/4}} \big)
\end{equation*}
for some $c_1, c_2 > 0$. By Borel-Cantelli lemma, we conclude that $\P$-almost surely, $\eE_R$ happens for all large $R$. This concludes the proof of the lemma. 
\end{proof}

\subsection{Local peaks}

Next, we discuss aspects about the geometry of local peaks of the field $\xi$. For $\delta, \eta, R>0$, $z \in Q_R$ and $L \in \NN$, let $\eE_R^z (\delta, \eta, L)$ be the event
\begin{equation*}
    \begin{split}
    \eE_R^z &(\delta, \eta, L) \triangleq \Big\{ \; \exists \; y_1, \dots, y_L \in B(z,\eta R) \cap Q_R \; \; \text{such that}\\
    &|\xi(y_j)| > \delta \sqrt{R} \; \text{for all} \; 1 \leqslant j \leqslant L, \; \text{and} \; d(y_i, y_j) > 9 r_0 \; \text{for all} \; i \neq j \Big\}\;,
    \end{split}
\end{equation*}
and
\begin{equation*}
    \eE_R (\delta,\eta,L) \triangleq \bigcup_{z \in Q_R} \eE_R^z (\delta,\eta,L)\;.
\end{equation*}
We have the following proposition. 

\begin{prop} \label{prop:local_maxima_sparse}
    There exists $\delta_0 > 0$ such that for every $\delta \in (0, \delta_0)$, $\eta < \delta^3$ and $L > \delta^{-3}$, we have
    \begin{equation*}
        \P \Big( \underset{R \rightarrow \infty}{\overline{\lim}} \eE_R(\delta,\eta,L) \Big) = 0\;.
    \end{equation*}
\end{prop}
\begin{proof}
    Let $\nN_{\eta R}^R$ be a maximal $(\eta R)$-net of $Q_R$. By definition, for every $z \in Q_R$, there exists $z' \in \nN^R_{\eta R}$ such that $B(z, \eta R) \subset B(z', 2 \eta R)$. Hence, we have
    \begin{equation} \label{eq:freezing_ball}
        \P \big( \eE_R (\delta, \eta, L) \big) \leqslant \sum_{z' \in \nN_{\eta R}^R} \P \big( \eE_R^{z'} (\delta, 2\eta, L) \big) \leqslant C_\eta \; e^{(d-1) R} \; \P \big( \eE_R^o (\delta, 2 \eta, L) \big)\;,
    \end{equation}
    where we also used $|\nN_{\eta R}^R| \leqslant C_\eta \, e^{(d-1)R}$ as well as stationarity of $\xi$ in the last step. 
    
    We now turn to estimating the quantity $\P \big( \eE_R^o (\delta, 2 \eta, L) \big)$. Let $\nN_{r_0}$ be a maximal $r_0$-net of $Q_{2 \eta R}$. Then we necessarily have $|\nN_{r_0}| \leqslant C_{\eta} e^{2 \eta (d-1) R}$. Suppose there exist $y_1, \dots, y_L \in Q_{2 \eta R}$ that are $9 r_0$-away from each other and that $|\xi(y_j)| > \delta \sqrt{R}$ for all $j$. Then for each $j = 1, \dots, L$, there exists $x_j \in \nN_{r_0}$ such that $y_j \in B(x_j, r_0)$. By the assumption that the points $\{y_j\}_{j=1}^{L}$ are $9 r_0$ away from each other, we see the points $x_1, \dots, x_L$ are $5 r_0$ away from each other. As a consequence, the balls $B(x_j, r_0)$ are all $3 r_0$ away from each other, and hence the field $\xi$ restricted to these balls are all independent. 

    By Lemma~\ref{lem:XiTail} and the above independence, there exists $c = c(r_0)$ such that
    \begin{equation*}
        \P \Big( \max_{z \in B(x_j, r_0)} |\xi(z)| > \delta \sqrt{R} \; \text{for all} \; j = 1, \dots, L \Big) \lesssim e^{- c L \delta^2 R}\;.
    \end{equation*}
    This implies
    \begin{equation*}
        \P \Big( \eE_R^o (\delta, 2 \eta, L) \Big) \lesssim \begin{pmatrix} |\nN_{r_0}| \\ L \end{pmatrix} e^{-c L \delta^2 R} \lesssim \exp \Big( \big( 2(d-1) \eta - c \delta^2 \big) L R \Big)\;.
    \end{equation*}
    Plugging the bound back into \eqref{eq:freezing_ball}, taking $L = \delta^{-3}$ and making $\delta_0$ sufficiently small, we get
    \begin{equation*}
        \P \big( \eE_R^o (\delta, 2\eta, L) \big) \lesssim e^{- c \delta^{-1} R}
    \end{equation*}
    for a possibly different $c>0$. The conclusion of the proposition then follows from the Borel-Cantelli lemma. 
\end{proof}

\begin{rmk} \label{rmk:local_maxima_sparse}
    Loosely speaking, Proposition~\ref{prop:local_maxima_sparse} says that if $\delta$ is sufficiently small and $\eta < \delta^3$, then $\P$-almost surely, for all sufficiently large $R$, no ball of radius $\eta R$ inside $Q_R$ can have more than $\delta^{-3}$ points in $\mM_\delta^R$ which are $9 r_0$ separated from each other. 
\end{rmk}

\section{The lower asymptotics}
\label{sec:MainLow}

In this section, we prove the precise lower asymptotics of $u(t,o)$. Recall the growth constant $L^*$ from \eqref{eq:OptimalExp}. The main statement is the following. 

\begin{thm} \label{thm:MainLow}
For every $\theta \in [0,1]$ and $K>0$, we have
\begin{equation} \label{eq:MainLow}
\underset{t\rightarrow\infty}{\underline{\lim}} \frac{1}{t^{5/3}} \log u(t,o) \geqslant L^*
\end{equation}
for $\P$-a.e. realisation of $\xi$. 
\end{thm}

We will show that for every $\theta \in [0,1]$ and $K>0$, we have
\begin{equation} \label{eq:MainLow_intermediate}
    \underset{t\rightarrow\infty}{\underline{\lim}}\frac{1}{t^{5/3}} \log u(t,o) \geqslant (1-\theta) \mu_0 \sqrt{K} - \frac{K^2}{4 \theta}\;.
\end{equation}
Theorem~\ref{thm:MainLow} then follows immediately from the definition of $L^*$ in \eqref{eq:OptimalExp} and that $\theta \in [0,1]$ and $K>0$ can be arbitrary. The proof of \eqref{eq:MainLow_intermediate} is to identify one Brownian scenario under which the growth exponent is $t^{5/3}$ and the constant matches the right hand side of \eqref{eq:MainLow_intermediate}. 

We fix $\theta \in [0,1]$ and $K>0$ arbitrary. We also fix a realisation of the Gaussian field $\xi$ and $T$ depending on the realisation such that all statements in Section~\ref{sec:field_as} hold for $R > K T^{4/3}$. We also set the scale function
\begin{equation} \label{eq:zeta}
    \zeta(t) \triangleq t^{-2/9}\;.
\end{equation}
This will be used as for the radius of the ball of peak values of $\xi$ near the boundary $Q_R$ for $R = R(t) \sim t^{4/3}$. Hence, according to Proposition~\ref{prop:XiPeak}, its radius is of order $R^{-1/6} \sim t^{-2/9}$. 

The rest of this section is as follows. In Section~\ref{sec:localisation_maximum}, we describe the Brownian scenario where we localise the Feynman-Kac formula around, and obtain a first lower bound for $u(t,o)$. In Section~\ref{sec:Low_proof_ingredients}, we give several other key ingredients and give a complete proof of Theorem~\ref{thm:MainLow} by first assuming these ingredients. In later subsections, we turn back and prove these ingredients.

\subsection{Localisation and picking up the maximum after $\theta t$}
\label{sec:localisation_maximum}

By Proposition~\ref{prop:XiPeak}, for every $t>T$, there exists $z^t \in \HH^d$ with
\begin{equation} \label{eq:location_z}
    K t^{4/3} - \sqrt{K} t^{2/3} < d(z^t, o) < K t^{4/3} - 1
\end{equation}
such that
\begin{equation} \label{eq:Xi_peak_lower}
    \min_{x \in B(z^t, \, 2 \zeta(t))} \xi(x) \geqslant \mu_0 \sqrt{K} t^{2/3} - 2 (\mu_0^2 K)^{\frac{1}{4}} t^{1/3}\;,
\end{equation}
where we recall from \eqref{eq:zeta} that $\zeta (t) = t^{-2/9}$. 

Let $\gamma^t: [0, \theta t] \rightarrow \HH^d$ be the geodesic parametrised at uniform speed with starting and end points $\gamma^t_0 = o$ and $\gamma^t_{\theta t} = z^t$. Fix $\eps>0$ small. Let $C_\eps(t)$, $E(t)$ and $S(t)$ be the events
\begin{equation} \label{eq:lower_bound_events}
    \begin{split}
    C_\eps(t) & \triangleq \Big\{  \sup_{s \in [0, \theta t]} d (W_s, \gamma^t_s) < \eps \Big\}\;, \quad E(t) \triangleq \Big\{W_{\theta t}\in B(z^{t},\zeta(t)) \Big\}\;,\\
    S(t) & \triangleq \Big\{W([\theta t,t]) \subseteq B(z^{t},2\zeta(t))  \Big\}\;.
    \end{split}
\end{equation}
They correspond to ``concentrating in the $\eps$-neighbourhood of the geodesic $\gamma^{t}$ in $[0, \theta t]$'', ``entering the ball $B(z^{t},\zeta(t))$ at time $\theta t$'' and ``staying inside $B(z^{t},2\zeta(t))$ in $[\theta t, t]$'' respectively. The role of $C_\eps(t)$ is to allow one to replace the Brownian trajectory by the geodesic $\gamma^{t}$, along which the integral of $\xi$ over $[0, \theta t]$ is easier to estimate. The role of $S(t)$, as we explained in Section \ref{subsec:StratLow}, is to let the Brownian motion stay at the peak region of $\xi$ to pick up its approximate maximum. 

Localising the Feynman-Kac formula \eqref{eq:FKIntro} on the event $C_\eps(t) \cap E(t) \cap S(t)$, and conditioning on ${\cal F}_{\theta t}$, we have
\begin{equation*}
u(t,o) \geqslant \mathbb{E}_{o} \bigg[e^{\int_{0}^{\theta t}\xi(W_{s})ds}{\bf 1}_{C_\eps(t) \cap E(t)} \cdot \mathbb{E} \Big(e^{\int_{\theta t}^{t} \xi(W_{s}) ds}{\bf 1}_{S(t)} | {\cal F}_{\theta t} \Big) \bigg].
\end{equation*}
By \eqref{eq:Xi_peak_lower} and definition of the event $S(t)$ in \eqref{eq:lower_bound_events}, there exists $c_0>0$ depending on $\mu_0$ and $K$ only such that
\begin{equation*}
    \int_{\theta t}^{t} \xi(W_{s}) ds \geqslant \; (1-\theta) \mu_0 \sqrt{K} \, t^{5/3} - c_0 t^{4/3} \quad \text{on} \; \; S(t)
\end{equation*}
for all sufficiently large $t$. Plugging it back to the above lower bound for $u(t,o)$, we get
\begin{equation} \label{eq:u_low_maximum}
    u(t,o) \geqslant e^{(1-\theta) \mu_0 \sqrt{K} t^{5/3} - c_0 t^{4/3}} \EE_o \Big[ e^{\int_{0}^{\theta t} \xi(W_s) ds} {\bf 1}_{C_\eps(t) \cap E(t)} \cdot \PP_{W_{\theta t}} \big( S(t) \big) \Big]
\end{equation}
for all large $t$. Our main task is now reduced to estimating the quantities appearing on the right hand side above.

\subsection{Proof of Theorem~\ref{thm:MainLow}}
\label{sec:Low_proof_ingredients}

In this section, we state propositions that control from below various parts in the expectation term on the right hand side of \eqref{eq:u_low_maximum}. Assuming these ingredients, we combine them together to prove Theorem~\ref{thm:MainLow}. The proofs of these ingredients are postponed to later subsections. 

The first ingredient is the probability of Brownian motion staying in $B(z^t, 2 \zeta(t))$ in time interval $[\theta t, t]$, conditioning on it entering $B(z^t, \zeta(t))$ at time $\theta t$. We give it in the following lemma. 

\begin{prop} \label{prop:StayProb}
    There exist universal constants $c_1, c_2 > 0$ such that
    \begin{equation*}
        \1_{E(t)} \cdot \PP_{W_{\theta t}} \big( S(t) \big) \geqslant c_1 \1_{E(t)} \cdot e^{-\frac{c_2 (1-\theta) t}{\zeta^2(t)}}
    \end{equation*}
    for all $t \geqslant 1$, where we recall $\zeta(t) = t^{-2/9}$ from \eqref{eq:zeta}, and definitions of the events $E(t)$ and $S(t)$ from \eqref{eq:lower_bound_events}. 
\end{prop}

Assuming this lemma for the moment, and applying it to $\1_{E(t)} \PP_{W_{\theta t}} \big( S(t) \big)$ on the right hand side of \eqref{eq:u_low_maximum} (recalling that $\zeta(t) = t^{-2/9}$), we get
\begin{equation} \label{eq:u_low_staying}
    u(t,o) \gtrsim e^{(1-\theta) \mu_0 \sqrt{K} t^{5/3} - c (t^{4/3} + t^{13/9})} \cdot \EE_o \Big[ e^{\int_{0}^{\theta t} \xi(W_s) ds} \, \1_{C_\eps(t) \cap E(t)} \Big]
\end{equation}
for some $c>0$ depending on $\theta$ and $K$, but independent of $\eps$ and $t$. 

The next statement controls the integral of $\xi$ on $[0, \theta t]$ from below on the event $C_\eps(t)$. 

\begin{prop} \label{prop:FormIntEst}
Let $\eps>0$ be as above and $\delta < \delta_0$ sufficiently small as in Proposition~\ref{prop:local_maxima_sparse}. There exists a deterministic constant $C>0$ independent of $\eps$, $\delta$ and $t$ such that for $\P$-a.e. realisation of $\xi$, we have
\begin{equation} \label{eq:FormIntEst}
\int_{0}^{\theta t} \xi(W_{s})ds \geqslant - C \, \left( \frac{t^{1/3}}{\delta^6} + (\eps + \delta) \, t^{5/3} \right) \quad \text{on} \; \; C_\eps(t)
\end{equation}
for all sufficiently large $t$, all $\eps>0$ and $\delta < \delta_0$ (where $\delta_0$ is as in Proposition~\ref{prop:local_maxima_sparse}). 
\end{prop}

Note that the expectation term on the right hand side of \eqref{eq:u_low_staying} is localised on $C_\eps(t)$. Hence, we can apply Proposition~\ref{prop:FormIntEst} to that expectation term to deduce
\begin{equation} \label{eq:u_low_early_integral}
    \begin{split}
    u(t,o) \gtrsim \exp \Big[ \big( &(1-\theta) \mu_0 \sqrt{K} - (\eps + \delta) \big) t^{5/3} - c \big( t^{4/3} + t^{13/9} + t^{1/3} / \delta^6 \, \big) \Big]\\
    &\times \PP_o \big( C_\eps(t) \cap E(t) \big)\;,
    \end{split}
\end{equation}
where the constant $c>0$ is again independent of $t$, $\eps$ and $\delta$ (but allowed to depend on $\theta$ and $K$). We now give the last ingredient -- a lower bound for $\PP_o \big( C_\eps(t) \cap E(t) \big)$. 

\begin{prop} \label{prop:probability_concentrating}
    There exists $c>0$ independent of $t$ (but depending on $\theta$, $K$ and $\eps$) such that
    \begin{equation*}
        \PP_o \big( C_\eps(t) \cap E(t) \big) \gtrsim \exp \Big( - \frac{K^2}{4 \theta} \cdot t^{5/3} - c \, t^{13/9} \Big)
    \end{equation*}
    for all sufficiently large $t$. 
\end{prop}

Now, applying Proposition~\ref{prop:probability_concentrating} to the probability term on right hand side of \eqref{eq:u_low_early_integral}, taking logarithm and sending $t \rightarrow +\infty$, we obtain
\begin{equation*}
    \underset{t\rightarrow\infty}{\underline{\lim}} \frac{1}{t^{5/3}} \log u(t,o) \geqslant (1-\theta) \mu_0 \sqrt{K} - \frac{K^2}{4 \theta} - C(\eps +\delta)\;,
\end{equation*}
which holds for all $\theta \in [0,1]$, $K>0$ and $\eps, \delta > 0$ sufficiently small. Hence, taking $\eps, \delta \rightarrow 0$ and optimising over $\theta \in [0,1]$ and $K>0$ gives Theorem~\ref{thm:MainLow}.

\subsection{Probability of staying inside a small ball -- proof of Proposition~\ref{prop:StayProb}}
\label{subsec:StayProb}

Note that on the event $E(t)$, we have
\begin{equation*}
    \PP_{W_{\theta t}} \big( S(t) \big) \geqslant \inf_{x \in B(z^t, \zeta(t))} \PP \big( S(t) | W_{\theta t} = x \big) \geqslant \PP_o \big( W |_{[0, (1-\theta) t]} \subset Q_{\zeta(t)} \big)\;.
\end{equation*}
The proof of Proposition~\ref{prop:StayProb} is complete once we get the following statement. 

\begin{lem}
There exist universal constants $C_1, C_2>0$ such that
\begin{equation} \label{eq:StayProb}
\PP_o \big( W |_{[0, (1-\theta) t]} \subset Q_{\zeta (t)} \big) \geqslant C_1 e^{- C_2 \frac{(1-\theta)t}{\zeta(t)^{2}}}
\end{equation}
for all $t \geqslant 1$. 
\end{lem}
\begin{proof}
Define $F: \RR^+ \rightarrow \RR^+$ by $F(r) = \coth (r)$ for $r \leqslant \frac{\zeta(t)}{4}$ and $F(r) = \frac{8}{\zeta(t)}$ for $r > \frac{\zeta(t)}{4}$. Since $\coth (r)$ is decreasing in $r$ and
\begin{equation*}
    \coth (r) = \frac{e^{2r} + 1}{e^{2r} - 1} = 1 + \frac{2}{e^{2r}-1} \leqslant 1 + \frac{1}{r}\;,
\end{equation*}
we see $F(r) \geqslant \coth (r)$ for all $r \geqslant 0$. Define the process $(\tilde{R}_s)_{s \geqslant 0}$ by
\begin{equation*}
    d \tilde{R}_s =  (d-1)  F(\tilde{R}_s) ds + \sqrt{2} \, d \beta_s\;, \quad \tilde{R}_0 = 0\;,
\end{equation*}
where $\beta$ is the standard Euclidean Brownian motion. Comparing the above equation with that for $R_s := d(W_s, o)$ in Lemma~\ref{lem:RadialSDE}, we see
\begin{equation*}
    \mathbb{P}_o \big( W([0,(1-\theta) t]) \subset Q_{\delta (t)} \big) \geqslant \mathbb{P} \Big( \sup_{s \in [0,(1-\theta)t]} \tilde{R}_s < \zeta(t) \Big)\;,
\end{equation*}
where with an abuse of notation, $\PP$ on the right hand side above denotes that for the standard Euclidean Brownian motion $\beta$. Let
\begin{equation} \label{eq:event_staying}
    \tau \triangleq (1-\theta) t \; \wedge \; \inf_{s > 0} \big\{\tilde{R}_s = \zeta(t)/2 \big\}\;.
\end{equation}
Since $F(r) =  8/ \zeta(t) $ for $r \in [\zeta(t)/4, \, 3\zeta(t)/4]$, then
\begin{equation*}
    \begin{split}
    \tilde{R}_s - \tilde{R}_\tau &= \frac{8(d-1)}{\delta} \cdot (s-u) + \sqrt{2} (\beta_s - \beta_\tau)\\
    &= \bigg( \frac{8(d-1)}{\zeta(t)} \cdot s + \sqrt{2} \beta_s \bigg) - \bigg( \frac{8(d-1)}{\zeta(t)} \cdot \tau + \sqrt{2} \beta_\tau \bigg)
    \end{split}
\end{equation*}
for all time $s>\tau$ and before $\tilde{R}$ hitting back on $\frac{\zeta(t)}{4}$. This implies if the trajectory $\beta$ satisfies
\begin{equation} \label{eq:event_Euclidea_BM}
     \bigg\{ \sup_{s \in [0, (1-\theta)t]} \Big| \sqrt{2} \beta_s + \frac{8(d-1)}{\zeta(t)} \cdot s \Big| < \frac{\zeta(t)}{8} \bigg\}\;,
\end{equation}
the process $\tilde{R}$ will stay in the interval $[\zeta(t)/4, 3\zeta(t)/4]$ after the hitting time $\tau$ and until time $(1-\theta) t$. In particular, $\tilde{R}$ never exceeds $\zeta(t)$ before time $(1-\theta) t$ under the event \eqref{eq:event_Euclidea_BM}. 

It then remains to compute the probability of the event \eqref{eq:event_Euclidea_BM}. By scaling, we have
\begin{equation*}
    \mathbb{P} \Big( \sup_{s \in [0, (1-\theta)t]} \Big| \sqrt{2} \beta_s + \frac{8(d-1)}{\zeta(t)} \cdot s \Big| < \frac{\zeta(t)}{8} \Big) = \mathbb{P} \Big( \sup_{s \in [0, \, c (1-\theta)t/\delta^2(t)]} |\beta_s - C s| < 1 \Big)
\end{equation*}
for some $c, C > 0$ independent of $t$. The claim then follows from standard large deviation estimates for Euclidean Brownian motion. 
\end{proof}

\subsection{Analysis of $\xi$-integral on $[0, \theta t]$ -- proof of Proposition~\ref{prop:FormIntEst}}

This and next subsection are devoted to controlling the expectation 
\begin{equation*}
    \EE_o \Big[ e^{\int_{0}^{\theta t} \xi(W_s) ds} \1_{C_\eps(t) \cap E(t)} \Big]\;.
\end{equation*}
This subsection focuses on the behaviour of the integral on the event $C_\eps(t)$, while in the next subsection we estimate the probability of the event $C_\eps(t) \cap E(t)$. 

Recall from \eqref{eq:location_z} the point $z^t$ and from Section~\ref{sec:localisation_maximum} that $\gamma^t: [0, \theta t] \rightarrow \HH^d$ is the geodesic connecting the origin $o$ and $z^t$ and parametrised at uniform speed. Write
\begin{equation*}
    R(t) \triangleq d (z^t, o) = K t^{4/3} + \oO(t^{2/3})\;.
\end{equation*}
Fix $\Lambda \in (9 r_0, 10 r_0)$ so that $N = N(t) = \frac{R(t)}{\Lambda}$ is an integer. Partition the time interval $[0, \theta t]$ into $N$ pieces of equal length as
\begin{equation} \label{eq:time_interval_partition}
    0 = t_0 < t_1 < \cdots < t_{N-1} < t_N = \theta t\;, \qquad t_k = k \theta \Lambda \cdot \frac{t}{R(t)}\;.
\end{equation}
The choice of $\Lambda$ depends on $t$ since one needs $N$ to be integer, but we omit it in notation since it has a universal lower and upper bound independent of $t$ and $\eps$. By \eqref{eq:location_z}, the length of each interval is
\begin{equation} \label{eq:time_interval_length}
    t_k - t_{k-1} = \theta \Lambda \cdot \frac{t}{R(t)} = \frac{\theta \Lambda}{K} \cdot t^{-1/3} + \oO(t^{-1})\;.
\end{equation}
The length of the geodesic segment of $\gamma^t$ restricted to each of these intervals is $\Lambda$. We are now ready to prove Proposition~\ref{prop:FormIntEst}. 

\begin{proof} [Proof of Proposition~\ref{prop:FormIntEst}]
Write
\begin{equation} \label{eq:integral_early_split}
    \int_{0}^{\theta t} \xi(W_s) ds = \int_{0}^{\theta t} \xi(\gamma_s^t) dx + \int_{0}^{\theta t} \big( \xi (W_s) - \xi(\gamma_s^t) \big) ds\;,
\end{equation}
and we separately control the two terms, starting from the first one with the geodesic $\gamma^t$ as input. 

Let $\delta_0$ be as in Proposition~\ref{prop:local_maxima_sparse}, and $\delta < \delta_0$ arbitrary. Each piece of trajectory $\gamma^t|_{[t_{k-1},t_k]}$ is a geodesic segment of length $\Lambda > 9 r_0$. Every consecutive $\frac{\delta^3 R(t)}{\Lambda}$ such geodesic segments forms a geodesic segment of length $\delta^3 R(t)$, and hence is contained in a ball of radius $\delta^3 R(t)$. Consider the set of integers
\begin{equation*}
    \aA \triangleq \Big\{ 1 \leqslant k \leqslant N\,: \; \inf_{u \in [t_{k-1}, t_k]} \xi(\gamma^t_u) < - \delta \sqrt{R(t)} \Big\}\;.
\end{equation*}
By Proposition~\ref{prop:local_maxima_sparse} (see also Remark~\ref{rmk:local_maxima_sparse}), $\P$-almost surely, for sufficiently large $t$, for every consecutive $\frac{\delta^3 R(t)}{\Lambda}$ integers, its intersection with $\aA$ has cardinality at most $\delta^{-3}$. Note that one needs at most $\delta^{-3}$ such consecutive integer sequence of cover $[1, N]$. As a consequence, the cardinality of $\aA$ satisfies $|\aA| \leqslant \delta^{-6}$. Hence, we have
\begin{equation} \label{eq:integral_early_1}
    \begin{split}
    \int_{0}^{\theta t} \xi(\gamma_s^t) ds &= \sum_{k \in \aA} \int_{t_{k-1}}^{t_k} \xi(\gamma_s^t) ds + \sum_{k \in \aA^c} \int_{t_{k-1}}^{t_k} \xi(\gamma^t_s) ds\\
    &\geqslant - 2 |\aA| \cdot \mu_0 \sqrt{K} t^{2/3} \cdot \frac{\theta \Lambda}{K} \big( t^{-1/3} + \oO(t^{-1}) \big) - \theta t \cdot \delta \sqrt{K} \cdot t^{2/3}\\
    &\geqslant - c \Big( \frac{t^{1/3}}{\delta^6} + \delta \, t^{5/3} \Big)\;.
    \end{split}
\end{equation}
Here in the inequality on the second line, we use the crude lower bound $\xi(\gamma_s^t) \geqslant - 2 \mu_0 \sqrt{K} t^{2/3}$ for $s \in [t_{k-1}, t_k]$ with $k \in \aA$ and also \eqref{eq:time_interval_length} for the length of the interval. The constant $c$ is independent of $\delta$ and $t$. 

We now turn to the second term on the right hand side of \eqref{eq:integral_early_split}. On the event $C_\eps(t),$ the Brownian motion $W$ is within the $\eps$-neighbourhood of the geodesic $\gamma^{t}$. In particular, we have
\begin{equation} \label{eq:CtyEst}
\left|\int_{0}^{\theta t} \xi(W_{s}) ds - \int_{0}^{\theta t} \xi(\gamma_{s}^{t}) ds \right|\leqslant \|\nabla\xi\|_{L^\infty(Q_{R(t)})} \, \eps \theta t\;.
\end{equation}
Since $\nabla \xi$ is again stationary Gaussian and has finite correlation length, by the same argument for Lemma~\ref{lem:XiGrowth}, one can show that $\|\nabla \xi\|_{L^\infty (Q_{R(t)})}$ is bounded by a constant multiple of $\sqrt{R(t)} \sim t^{2/3}$ for all large $t$ (depending on the law of $\xi$). Hence, there exists a deterministic constant $C>0$ such that for $\P$-a.e. realisation of $\xi$, we have
\begin{equation} \label{eq:integral_early_2}
\left| \int_{0}^{\theta t} \big( \xi(W_{s}) - \xi(\gamma_s^t) \big) ds \right| \leqslant C \eps t^{5/3}
\end{equation}
on the ($W$-)event $C_\eps(t)$ for all sufficiently large $t$. The proof of Proposition~\ref{prop:FormIntEst} is complete by combining the bounds \eqref{eq:integral_early_1} and \eqref{eq:integral_early_2}. 
\end{proof}

\subsection{Localisation around the geodesic before time $\theta t$ -- proof of Proposition~\ref{prop:probability_concentrating}}

We now turn to the proof of Proposition~\ref{prop:probability_concentrating}. Recall the partition \eqref{eq:time_interval_partition} for the time interval $[0, \theta t]$. For each $1 \leqslant k \leqslant N$, let $C_\eps^k(t)$ and $E^k(t)$ be the events
\begin{equation*}
    C_\eps^k(t) \triangleq \left\{ \sup_{s \in [t_{k-1}, t_k]} d(W_s, \gamma^t_s) < \eps \right\}\;, \qquad E^k(t) = \left\{ \; d(W_{t_k}, \gamma^t_{t_k}) < \zeta(t) \; \right\}\;.
\end{equation*}
All these events depend on $t$, but we will omit $t$ in the notation for simplicity. Hence, we have
\begin{equation*}
    \PP_o \big( C_\eps(t) \cap E(t) \big) \geqslant \PP_o \bigg( \bigcap_{k=1}^{N} \big( C_\eps^k \cap E^k \big) \bigg) = \EE_o \bigg[ \prod_{k=1}^{N} \1_{C_\eps^k \cap E^k} \bigg]\;.
\end{equation*}
Note that for each $k$, on the event $C_\eps^{k-1}$, we have
\begin{equation*}
    \PP^{W_{t_{k-1}}} \big( C_\eps^k \cap E^k \big) \geqslant \inf_{x \in B (\gamma^t_{t_{k-1}}, \; \zeta(t))} \PP \big( C_\eps^k \cap E^k \; | \; W_{t_{k-1}} = x \big) = \inf_{x \in Q_{\zeta(t)}} \PP_x \big( C_\eps^1 \cap E^1 \big)\;,
\end{equation*}
where the first inequality is valid on the event $C_\eps^{k-1}$, and the second equality follows from invariance of Brownian motion under action of the isometry group. Hence, by successively conditioning on $\fF_{t_{N-1}}, \dots, \fF_{t_1}$, we get
\begin{equation} \label{eq:prob_concentrating_factorisation}
    \PP_o \big( C_\eps(t) \cap E(t) \big) \geqslant \left( \inf_{x \in Q_{\zeta(t)}} \PP_x \big( C_\eps^1 \cap E^1 \big) \right)^N\;,
\end{equation}
where $N = \frac{R(t)}{\Lambda} = \frac{K t^{4/3}}{\Lambda} + \oO(t^{2/3})$. It then remains to estimate $\PP_x \big( C_\eps^1 \cap E^1 \big)$ over starting points $x \in Q_{\zeta(t)}$. We have
\begin{equation} \label{eq:prob_complement}
    \PP_x \big( C_\eps^1 \cap E^1 \big) = \PP_x (E^1) - \PP_x \big( (C_\eps^1)^c \cap E^1 \big)\;,
\end{equation}
and we control the two probabilities on the right hand side above from below and above respectively. 

\begin{lem} \label{lem:probability_entering}
    There exists $c>0$ depending on $\theta, K$ and $\Lambda$ such that
    \begin{equation*}
        t^{-7d/18} e^{-\frac{\Lambda K}{4 \theta} \cdot t^{1/3} - c t^{1/9}} \lesssim \PP_x \big( E^1 \big) \lesssim t^{-7d/18} e^{-\frac{\Lambda K}{4 \theta} \cdot t^{1/3} + c t^{1/9}}\;,
    \end{equation*}
    uniformly over all $x \in Q_{\zeta(t)}$ and all sufficiently large $t$. 
\end{lem}
\begin{proof}
    We have
    \begin{equation*}
        \PP_x \big( E^1 \big) = \int_{B(\gamma^t_{t_1}, \zeta(t))} p(s,x,y) \, \text{vol} (dy) \geqslant \text{vol} \big( Q_{\zeta(t)} \big) \cdot \inf_{x,y} p(s,x,y)\;,
    \end{equation*}
    where $s= t_1 = \frac{\theta \Lambda}{K} \cdot t^{-1/3} + \oO(t^{-1})$, and the infimum is taken over $x \in Q_{\zeta(t)}$ and $y \in B(\gamma^t_{t_1}, \zeta(t))$. Since $d(\gamma_{t_1}^t, o) = \Lambda$ by definition, we have
    \begin{equation*}
        \Lambda - 2 \zeta(t) < d(x,y) < \Lambda + 2 \zeta (t)
    \end{equation*}
    uniformly over $x,y$ in the above range. Hence, by the heat kernel asymptotics \eqref{eq:HKEst_small}, we have
    \begin{equation*}
        t^{-d/6} e^{-\frac{\Lambda K}{4 \theta} \cdot t^{1/3} - c t^{1/9}} \lesssim p(s,x,y) \lesssim t^{-d/6} e^{-\frac{\Lambda K}{4 \theta} \cdot t^{1/3} + c t^{1/9}}
    \end{equation*}
    for some $c>0$, uniformly over $x \in Q_{\zeta(t)}$, $y \in B(\gamma^t_{t_1}, \zeta(t))$ and sufficiently large $t$. Combining with the volume estimate $\text{vol}\big( Q_{\zeta(t)} \big) \asymp t^{-2d/9}$ (see \eqref{eq:volume_small} from Lemma~\ref{lem:volume}), we arrive at the conclusion of the lemma. 
\end{proof}

Now we consider the probability
\begin{equation*}
    \PP_x \big( (C_\eps^1)^c \cap E^1 \big) = \PP_x \left( \sup_{u \in [0,t_1]} d(W_u, \gamma_u^t) \geqslant \eps\;, \; \; d(W_{t_1}, \gamma^t_{t_1}) < \zeta(t) \right)\;. 
\end{equation*}
In order to apply the large deviations estimate for Brownian motion, we need to consider a slightly different event in which the Brownian motion $W$ and the geodesic have the same starting point. For this, let $\alpha^x: [0, t_1] \rightarrow \HH^d$ be the geodesic with $\alpha^x_0 = x$ and $\alpha^x_{t_1} = \gamma^t_{t_1}$. By convexity of geodesic property \eqref{eq:GeoConv}, we have
\begin{equation*}
    \sup_{u \in [0, t_1]} d(\alpha^x_u, \gamma^t_u) \leqslant d(x,o) < \zeta(t)\;.
\end{equation*}
Since $\zeta(t) \ll \eps$ for sufficiently small $t$, we have
\begin{equation*}
    \PP_x \big( (C_\eps^1)^c \cap E^1 \big) \leqslant \PP_x \left( \sup_{u \in [0, t_1]} d(W_u, \alpha^x_u) > \frac{\eps}{2}\;, \; \; d(W_{t_1}, \alpha^x_{t_1}) < \zeta(t) \right)\;.
\end{equation*}
The probability on the right hand side above depends on the starting point $x$ only via the quantity $d(x, \alpha_{t_1}^x) = d(x, \gamma_{t_1}^t)$, the length of the geodesic segment $\alpha^x$. Since the length is within a $\zeta(t)$ neighbourhood of $\Lambda$, we can think of length as fixed as $t \rightarrow +\infty$. 

\begin{lem} \label{lem:LDP}
    Let $\alpha: [0,s] \rightarrow \HH^d$ be a geodesic with length $L>0$. Then we have
    \begin{equation*}
        \PP_o \Big( \sup_{u \in [0,s]} d(W_u, \alpha_u) > \eps\;, \  d(W_s, \alpha_s) < \zeta \Big) \lesssim \exp \Big( -\frac{1}{4s} \big( L^2 + C \big( \eps^2 - \zeta \big) \big) \Big)
    \end{equation*}
    uniformly over all sufficiently small $\eps>0$, $\zeta \lesssim \eps^4$ and all sufficiently small $s$. 
\end{lem}

\begin{proof}
This is postponed to Appendix A.
\end{proof}

We are now ready to conclude the proof of Proposition~\ref{prop:probability_concentrating}. 

\begin{proof} [Proof of Proposition~\ref{prop:probability_concentrating}]
    Combining Lemmas~\ref{lem:probability_entering} and~\ref{lem:LDP}, and plugging them back to \eqref{eq:prob_complement} (with $s = \frac{\theta \Lambda}{K} \cdot t^{-1/3} + \oO(t^{-1})$ and $L \in (\Lambda - \zeta(t), \Lambda + \zeta(t))$), we have the lower bound
    \begin{equation*}
        \PP_x \big( C_\eps^1 \cap E^1 \big) \gtrsim e^{-\frac{\Lambda K}{4 \theta} \cdot t^{1/3}} \big( e^{-c_1 t^{1/9}} - e^{- c_2 t^{1/3}} \big) \gtrsim e^{-\frac{\Lambda K}{4 \theta} \cdot t^{1/3} - c t^{1/9}}
    \end{equation*}
    for all sufficiently large $t$ and uniformly over all $x \in Q_{\zeta(t)}$, where $c_1$ and $c_2$ are both independent of $t$ (though $c_2$ depends on $\eps$). Plugging this bound back to \eqref{eq:prob_concentrating_factorisation} and noting that $N = \frac{K t^{4/3}}{\Lambda} + \oO(t^{2/3})$, we obtain
    \begin{equation*}
        \PP_o \big( C_\eps(t) \cap E(t) \big) \geqslant C^{t^{4/3}} \cdot e^{-\frac{K^2}{4 \theta} \cdot t^{5/3} - c t^{13/9}}\;.
    \end{equation*}
    This concludes the proof. 
\end{proof}

\section{The matching upper asymptotics}
\label{sec:MainUp}

In this section, we prove the matching upper bound for \eqref{eq:MainLow}. The main theorem is stated as follows.

\begin{thm}
\label{thm:MainUp}Let $L^{*}$ be defined by (\ref{eq:OptimalExp}).
Then $\P$-almost surely, we have
\begin{equation} \label{eq:MainUp}
\underset{t\rightarrow\infty}{\overline{\lim}}\frac{1}{t^{5/3}}\log u(t,o)\leqslant L^{*}.
\end{equation}
\end{thm}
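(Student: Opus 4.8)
\emph{Overall strategy.} Working from the Feynman--Kac identity $u(t,o)=\mathbb{E}_o\big[e^{\int_0^t\xi(W_s)\,ds}\big]$, the plan is to partition Brownian path space according to the \emph{itinerary} that $W$ follows among the peak regions of $\xi$, to bound the contribution of each itinerary by a reward--cost balance, and to show --- via an elementary convexity inequality and its $N$-fold iterate --- that no single itinerary, hence not their (subexponentially many) total, beats $e^{L^*t^{5/3}}$. For each pair of auxiliary parameters $\eta,\delta>0$ this produces the almost sure bound $\limsup_{t}t^{-5/3}\log u(t,o)\le L^*+C(\eta+\delta)$, and one concludes by sending $\eta,\delta\downarrow0$.

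\emph{Step 1: confinement.} First one discards paths that travel unreasonably far. On $\{W([0,t])\subseteq Q_R\}$ one has $\int_0^t\xi(W_s)\,ds\le t\,\mu\sqrt R$ for any fixed $\mu>\mu_0$ and all large $R$ (Lemma~\ref{lem:XiGrowth}), while $\mathbb{P}(\tau_R\le t)\le C_1e^{-C_2R^2/t}$ (Lemma~\ref{lem:ExitEst}); a dyadic sum over the shells $Q_{2^{j+1}}\setminus Q_{2^j}$ with $2^j\ge Mt^{4/3}$ then shows that the contribution of $\{W\not\subseteq Q_{Mt^{4/3}}\}$ is $\exp\!\big(O(\sqrt M\,t^{5/3})-C_2M^2t^{5/3}\big)\ll e^{L^*t^{5/3}}$ once the fixed constant $M$ is large. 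Thus it suffices to estimate $\mathbb{E}_o\big[e^{\int_0^t\xi(W_s)ds};W([0,t])\subseteq Q_{Mt^{4/3}}\big]$, so that all radii below are $\le Mt^{4/3}$.

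\emph{Step 2: itinerary decomposition and the reward--cost balance.} Fix $\eta,\delta>0$ and split $[0,t]$ into $N=\lceil1/\eta\rceil$ intervals of length $\asymp\eta t$. On each interval record the rescaled maximal and terminal radii $R_i^*/t^{4/3},R_i/t^{4/3}$ of $W$ (rounded to a grid of mesh $\asymp t^{-1/3}$) and a bit indicating whether the interval is a \emph{stay} (near one $\xi$-peak cluster at distance $\asymp K_it^{4/3}$) or a \emph{transit}; the number of resulting itineraries is polynomial in $t$, hence $e^{o(t^{5/3})}$, and it is enough to bound one of them. For a fixed itinerary, on a stay interval Lemma~\ref{lem:XiGrowth} bounds the integrand by $\mu\sqrt{K_i}\,t^{2/3}$, and the probability of remaining inside the cluster is $e^{o(t^{5/3})}$ (clusters have radius $\to\infty$, so the relevant Dirichlet eigenvalue is $O(1)$), giving a factor $\le\exp(\mu\eta_i\sqrt{K_i}\,t^{5/3})$; the heat-kernel bound (Lemma~\ref{lem:HKEst}) bounds the probability of the $j$-th inter-cluster jump, of rescaled length $\ell_j$ in rescaled time $\varepsilon_j$, by $\exp(-\tfrac{\ell_j^2}{4\varepsilon_j}t^{5/3}+O(t^{4/3}))$. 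Granting that the $\xi$-integral accumulated during \emph{transit} is $\le o(t^{5/3})$ modulo an $O(\delta)t^{5/3}$ term, the exponent of the itinerary is at most $t^{5/3}\big[\mu\sum_i\eta_i\sqrt{K_i}-\sum_j\tfrac{\ell_j^2}{4\varepsilon_j}\big]$ up to $o(t^{5/3})+O(\delta)t^{5/3}$, and the elementary inequality --- the corrected form of~(\ref{eq:EleIntro}) with $\mu$ in place of $\mu_0$, together with its iteration, which amounts to $\sum_i\eta_i\sqrt{K_i}\le(\sum_i\eta_i)\sqrt{K_{\max}}$ and $\sum_j\ell_j^2/\varepsilon_j\ge K_{\max}^2/\sum_j\varepsilon_j$ (Cauchy--Schwarz along the sub-path reaching the furthest cluster) --- caps this bracket by $\max_{\varepsilon\in(0,1),K>0}\big[(1-\varepsilon)\mu\sqrt K-\tfrac{K^2}{4\varepsilon}\big]$. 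Summing over itineraries, letting $\mu\downarrow\mu_0$ so the cap tends to $L^*$, and applying Borel--Cantelli along a discrete time sequence (monotonicity handling interpolation) gives~(\ref{eq:MainUp}).

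\emph{The main obstacle.} The crux --- and the reason the global arguments of~\cite{CM95,GKM00} do not transfer --- is the control of the transit contribution. The pointwise bound $\xi(W_s)\le\mu\sqrt{R_s}$ is far too lossy on transit intervals: used alone there it yields a strictly larger constant than $L^*$, because it pretends $W$ sits at the running maximum of $\xi$ even while moving macroscopically. It must be replaced by an estimate reflecting that the near-maximal level sets of $\xi$ are geometrically sparse --- quantitatively, the super-level-set counterpart of Lemma~\ref{lem:NegIsl} (the forthcoming Lemma~\ref{lem:ClusterProperty}), that any ball of radius $\asymp\sqrt{\eta_\delta}t^{4/3}$ meets only boundedly many well-separated high points of $\xi$. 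A second, more delicate point, already flagged in Section~\ref{sec:ComEuc}, is that the natural size of a peak cluster is $\asymp\eta t^{4/3}$, which is \emph{comparable} to the jump lengths $\ell_j$, so naively replacing ``cluster'' by ``point'' in the cost estimate perturbs each $\ell_j$ by $O(\eta)$ and, over the $N\asymp\eta^{-1}$ jumps, introduces a genuine $O(t^{5/3})$ error in the bracket above. Overcoming this requires pinning down the entrance and exit points of each cluster far more precisely --- using that clusters are in fact concentrated on a scale well below $\eta t^{4/3}$ --- and tracking the Brownian route through the clusters at that finer scale, so that the Cauchy--Schwarz step in Step~2 survives. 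This refined geometric and combinatorial analysis is where the bulk of the work lies.
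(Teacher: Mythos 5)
Your outline captures the right global architecture: confine to $Q_{K_0t^{4/3}}$ (this matches Lemma~\ref{lem:RoughLocal} and its dyadic shell argument), discretise Brownian paths into a finite (indeed $e^{o(t^{5/3})}$-many) collection of ``itineraries'', bound each itinerary by a reward--cost tradeoff, and cap the resulting bracket via the inequality $\sum_j \ell_j^2/\varepsilon_j\geqslant(\sum_j\ell_j)^2/\sum_j\varepsilon_j$, which is precisely Lemma~\ref{lem:ElemIneq}. However, your proposal stops exactly where the paper's proof actually begins, and the step you defer is the theorem.

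Two concrete problems. First, your itinerary decomposition is a fixed partition of $[0,t]$ into $N\asymp\eta^{-1}$ time intervals of length $\eta t$ with a ``stay/transit'' label; the paper instead decomposes in \emph{space}, defining the stopping times $\sigma_i,\tau_i$ at cluster entrances/exits and the extended $\lambda$-route $\hat D_\lambda^t$ (Definition~\ref{def:DiscRt}), then localises to bounded route length via Lemma~\ref{lem:LongRtEst} and~(\ref{eq:LongExtRt}). The space-based version is not a stylistic choice: the Markov property at $\tau_i$ is what reduces the ``staying'' part to a recursive hitting-time integral with the explicit density~(\ref{eq:HitDensity}), which is then estimated in closed form in~(\ref{eq:IDef}). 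With a fixed time mesh, a single interval can contain several cluster entries and exits, the stay/transit classification is ambiguous, and no comparable clean conditioning is available; you would need to redo essentially all of Section~\ref{subsec:Reduction} and Step~2 of Proposition~\ref{prop:KeyEst} in a setting the paper does not treat, and it is not at all clear this can be pushed through.

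Second, and more seriously, you correctly diagnose that the naive replacement of clusters by points introduces an $O(\eta t^{4/3})$ error per jump, which over $N\asymp\eta^{-1}$ jumps becomes an uncontrolled $O(t^{5/3})$ error, and then you write ``overcoming this requires pinning down the entrance and exit points of each cluster far more precisely \ldots\ This refined geometric and combinatorial analysis is where the bulk of the work lies.'' That is a statement of the problem, not its solution. The paper's resolution is a specific two-scale construction: introduce $\lambda\ll\eta$, track the extended $\lambda$-route, and then pass to its \emph{reduced} word $\bar{\mathbf w}$ (Definition~\ref{def:RtRed}) obtained from the prefix ending at the furthest $\lambda$-cluster. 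The three estimates that make this work are Lemma~\ref{lem:NumLamClusEta} and Lemma~\ref{lem:LengthRedRt} (each $\eta$-cluster contains at most $L_\delta$ $\lambda$-clusters, so $\bar m\leqslant N_\eta L_\delta$, a bound depending on $\eta,\delta$ but not $\lambda$), Corollary~\ref{cor:ClusDiam} ($\lambda$-clusters have diameter $\leqslant 2L_\delta\lambda t^{4/3}$), and Lemma~\ref{lem:Trian}, which quantifies the triangle-inequality defect as $O\big(N_\eta L_\delta^2\lambda\big)t^{4/3}$. Because the prefactor $N_\eta L_\delta^2$ is fixed once $\eta,\delta$ are, shrinking $\lambda$ subject to the constraint~(\ref{eq:LamEtaConst}) renders this error harmless --- whereas with a single scale $\eta$ the analogous bound would be $O(N_\eta L_\delta^2\eta)t^{4/3}\asymp t^{4/3}$ with no room to shrink, which is exactly the obstruction you identify. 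Applying Cauchy--Schwarz only along the reduced route (inequality~(\ref{eq:KeyEstPf4})) and splitting the exponent into an $\alpha$-fraction for the main term and a $(1-\alpha)$-fraction to absorb the error (decomposition~(\ref{eq:ExpDecomp}) and the integral $J$) is how the paper actually closes the argument. None of this machinery, or any substitute for it, appears in your proposal; without it the claimed bound $\limsup t^{-5/3}\log u(t,o)\leqslant L^*+C(\eta+\delta)$ is unproved.

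Minor point: you invoke ``the corrected form of~(\ref{eq:EleIntro}),'' which implicitly flags the $\sqrt{\mu_0}$ and $\varepsilon_i^2$ in that display as typos for $\mu_0$ and $\varepsilon_i$; that reading is consistent with~(\ref{eq:DefF}) and the rest of the paper.
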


In the proof for the lower bound, we identified a particular scenario
of Brownian motion under which the solution $u(t,o)$ picks up the
growth of $e^{L^{*}t^{4/3}}$. To prove the matching upper bound \eqref{eq:MainUp}, we have to take into account all possible Brownian trajectories and show that \textit{no Brownian scenarios (nor their total contribution) could produce a growth exponent larger than} $L^{*}.$ This is more challenging than the lower bound. In the following subsections, we develop the major steps for the proof of Theorem~\ref{thm:MainUp} in a mathematically precise way.

\subsection{An initial localisation}

We first localisie the Brownian motion on a large fixed ball (with radius of size $\oO(t^{4/3})$). 

\begin{lem}\label{lem:RoughLocal}
There exist a deterministic constant $K_{0}$ such that for almost every realisation of $\xi$, we have 
\[
\mathbb{E}_o \Big[e^{\int_{0}^{t}\xi(W_{s})ds}; \sup_{s \in [0,t]} d(W_{s},o) > K_{0}t^{4/3} \Big] \leqslant e^{- t^{5/3}}
\]
for all large $t$ (depending on the realisation of $\xi$). 
\end{lem}
\begin{proof}
Fix $K_0>0$ whose value will be specified later. For every $t \geqslant 1$, let $\nN_n = \nN_n(t)$ be the event
\begin{equation*}
    \nN_n \triangleq \Big\{\sup_{s \in [0,t]} d(W_{s},o) \in \big(n K_0 t^{4/3},(n+1) K_0 t^{4/3} \big] \Big\}\;.
\end{equation*}
The events $\{\nN_n\}_n$ depend on $t$ and $K_0$ but we omit them for notational simplicity. We then have
\begin{equation} \label{eq:Up4}
    \mathbb{E}_o \Big[e^{\int_{0}^{t}\xi(W_{s})ds}; \sup_{s \in [0,t]} d(W_{s},o) > K_0 t^{4/3} \Big] = \sum_{n=1}^{\infty} \mathbb{E}_o \big[e^{\int_{0}^{t}\xi(W_{s})ds}; \, {\cal N}_{n}\big]\;.
\end{equation}
By Lemma~\ref{lem:XiGrowth}, for almost every realisation of $\xi$, there exists $T>0$ such that on each $W$-event $\nN_n$, we have
\begin{equation*}
    \sup_{s \in [0,t]} \xi(W_{s}) \leqslant 2 \mu_{0} \sqrt{(n+1) K_0 t^{4/3}}
\end{equation*}
for all $t>T$. Note that this (random) $T$ depends on the realisation of $\xi$ only, and is independent of $n$ and $K_0$. It then follows from \eqref{eq:ExitEst} that
\begin{equation} \label{eq:Upper2}
\begin{split}
\mathbb{E}_o \big[e^{\int_{0}^{t}\xi(W_{s})ds}; {\cal N}_{n}\big] &\leqslant e^{2\mu_{0} \sqrt{(n+1) K_0} \, t^{5/3}} \mathbb{P}_o \Big(\sup_{s \in [0,t]}d(W_{s},o) \geqslant n K_0 \, t^{4/3} \Big)\nonumber \\
&\leqslant C e^{\big(2\mu_{0}\sqrt{(n+1) K_0} - c  n^{2} K_0^{2}\big)t^{5/3}}
\end{split}
\end{equation}
for some $c,C>0$. By choosing $K_{0}$ sufficiently large, one can ensure that 
\[
2\mu_{0}\sqrt{(n+1) K_{0}} - c n^{2} K_{0}^{2} < - 2 n
\]
for all $n \geqslant 1$. For this choice of $K_{0}$, the sum on the right hand side of \eqref{eq:Up4} is bounded by $e^{-t^{5/3}}$ for all sufficiently large $t$. This proves the lemma. 
\end{proof}

From now on, we fix the parameter $K_{0}$ as in Lemma \ref{lem:RoughLocal}.
Our task thus reduces to proving the upper bound (\ref{eq:MainUp})
with the unconditional expectation replaced by 
\begin{equation}
\mathbb{E}\big[e^{\int_{0}^{t}\xi(W_{s})ds};M^{t}\big],\label{eq:ExpLocal}
\end{equation}
where $M^{t}\triangleq\{d(W_{s},o)\leqslant K_{0}t^{4/3}\ \forall s\in[0,t]\}.$

\subsection{Islands, clusters and cluster sequences}

We introduce the notion of islands and clusters that depend on the field $\xi$, and prove a few properties that will be important to us later on. In what follows, we fix two parameters $\delta, \lambda > 0$ (we always assume $\lambda < \delta^8$). Let $t>0$ be given. The set
\begin{equation} \label{eq:set_large}
    \mM_\delta^t \triangleq \left\{ x \in Q_{K_0 t^{4/3}}: \; \xi(x) > \delta \, t^{2/3}  \right\}
\end{equation}
is almost surely a disjoint union of finitely many open sets.

\begin{defn} [Islands and clusters]
\label{def:cluster}
Recall $t>0$ and $\delta, \lambda \in (0,1)$ with $\lambda < \delta^8$ are fixed parameters, and the set $\mM_\delta^t$ from \eqref{eq:set_large}. 
\begin{enumerate} [(i)]
    \item A $(\delta,t)$-island is a connected component of $\mM_\delta^t$. The collection of $(\delta,t)$-islands is denoted by $\iI_\delta^t$. 
    
    \item Two points $x, y \in \mM_\delta^t$ are called $(\lambda,t)$-connected (in $\mM_\delta^t$), denoted by $\sim_{(\lambda,t)}$, if there exists $N \in \NN$ and points $x_1, \dots, x_N \in \mM_\delta^t$ such that
    \begin{equation*}
        d(x_j, x_{j+1}) < \lambda \, t^{4/3}
    \end{equation*}
    for all $j=0,1,\dots, N$, where we denote $x_0 = x$ and $x_{N+1} = y$. The relation $\sim_{(\lambda,t)}$ is an equivalence relation. 

    \item The relation $\sim_{(\lambda,t)}$ partitions points in $\mM_\delta^t$ into equivalence classes. Write
    \begin{equation*}
        \Clus_{\delta,\lambda}^t \triangleq \Big\{ \text{Equivalence classes of elements in} \; \mM_\delta^t \; \text{under} \; \sim_{(\lambda,t)} \Big\}\;.
    \end{equation*}
    An element in $\Clus_{\delta,\lambda}^t$ is called a $(\lambda,t)$-cluster of $(\delta,t)$-islands. 
\end{enumerate}
\end{defn}

By definition, an element (cluster) in $\Clus_{\delta,\lambda}^t$ is a finite union of $(\delta,t)$-islands. Different elements (clusters) in $\Clus_{\delta,\lambda}^t$ are at least $\lambda t^{4/3}$-apart from each other. Note that the islands and clusters are random sets since they are defined for each realisation of $\xi$. The following lemma summarises the essential properties of $(\lambda,t)$-clusters that are needed for our purpose. Recall that $r_0$ is the correlation length of the Gaussian field $\xi$, as well as the constant $C_r$ from Lemma~\ref{lem:XiTail}.

\begin{lem} \label{lem:ClusterProperty}
For $\delta>0$, define
\begin{equation} \label{eq:LCond}
L_{\delta} \triangleq \delta^{-4}\;.
\end{equation}
Then there exists a deterministic $\delta_0>0$ such that for $\P$-a.e. realisation of $\xi$, there exists (a random) $T>0$ such that the followings are true for all $t>T$, $\delta < \delta_0$ and $\lambda < \delta^8$: 
\begin{enumerate}
\item Every cluster $\fC \in \Clus_{\delta,\lambda}^t$ has diameter at most $L_\delta \lambda t^{4/3}$. 

\item Every cluster $\fC \in \Clus_{\delta,\lambda}^t$ contains at most $L_\delta$ points in $\mM_{\delta}^{t}$ that are at least $9r_0$-apart from each other.
\end{enumerate}
\end{lem}
\begin{proof}
We first prove Assertion 1. Let $\fC \in \Clus_{\delta,\lambda}^t$ with $\diam(\fC) = \eta t^{4/3}$. Then, there exist $x,y \in \fC$ with $d(x,y) > \frac{\eta t^{4/3}}{2}$. For each $k \geqslant 1$, let
\begin{equation*}
    A_{k} \triangleq \left\{z\in Q_{K_{0}t^{4/3}}:(k-1) \lambda t^{4/3} < d(z,x) \leqslant k \lambda t^{4/3} \right\}\;.
\end{equation*}
By Definition~\ref{def:cluster}, there exists $x_k \in A_k \cap \fC$ for each $k=1, \dots, \frac{\eta}{2 \lambda}$, for otherwise $x,y$ cannot belong to the same cluster in $\Clus_{\delta,\lambda}^t$. Thus, the points $\{x_{2k}\}_{k=1}^{\eta / 4\lambda}$ are in $\fC$ and are at least $\lambda t^{4/3}$-distance away from each other. 

If $\eta > L_\delta \lambda$, then the first $L_\delta$ points $x_2, x_4, \dots, x_{2 L_\delta}$ of the above sequence are all contained in the ball centered at $x$ with radius $2 L_\delta \lambda t^{4/3} < 2 \delta^4 t^{4/3}$ if $\lambda < \delta^8$. This contradicts Proposition~\ref{prop:local_maxima_sparse} for sufficiently large $t$ (since $2 \delta^4 < \delta^3$ for $\delta$ small, and $\lambda t^{4/3} > 9 r_0$ for large $t$). Hence, we necessarily have $\eta < L_\delta \lambda$. This proves Assertion 1. 

Since any $\fC \in \Clus_{\delta,\lambda}^t$ has diameter at most $L_\delta \lambda t^{4/3} < \delta^4 t^{4/3}$ and is thus contained in a ball of the same radius, Assertion 2 then follows from Proposition~\ref{prop:local_maxima_sparse}. 
\end{proof}

Before we proceed, we introduce one more notation. Let
\begin{equation*}
    \big( \Clus_{\delta,\lambda}^t \big)^{\NN_0} \triangleq \left\{ \vec{\fS} = (\fC_1, \, \dots, \, \fC_m)\,: \; m \in \NN^+\,, \; \fC_k \in \Clus_{\delta,\lambda}^t \right\}
\end{equation*}
be the collection of all ordered finite sequences of elements in $\Clus_{\delta,\lambda}^t$. Suppose $\vec{\fS} \in (\Clus_{\delta,\lambda}^t)^{\NN_0}$ has the form $\vec{\fS} = (\fC_1, \, \dots, \, \fC_m)$. Write $|\vec{\fS}| = m$ for the length of $\vec{\fS}$, and $\vec{\fS}_k = \fC_k \in \Clus_{\delta,\lambda}^t$ be the $k$-th element in this cluster sequence.

\begin{rmk}
    The reason we set $L_\delta = \delta^{-4}$ instead of $\delta^{-3}$ as in Proposition~\ref{prop:local_maxima_sparse} is that the setups are slightly different in the multiplicative constants. Hence we add another power of $\delta$ to kill the effect of multiplicative constants. 
\end{rmk}


%


\subsection{Discrete Brownian routes and a further localisation}

To bound the expectation \eqref{eq:ExpLocal} from above for each fixed realisation of $\xi$, we need to carefully analyse the contributions from \textit{all possible ways} of the Brownian motion propagating through islands and clusters. For each continuous path $\gamma: [0,t] \rightarrow \HH^d$, we associate it with a unique element in $\tT_{\delta,\lambda}^t(\gamma) \in \big( \Clus_{\delta,\lambda}^t \big)^{\NN_0}$ as follows.

\begin{defn} [Discrete trajectory]
\label{def:DiscRt}
Fix $\gamma: [0,t] \rightarrow \HH^d$ continuous. Let $\tau_0 \triangleq 0$. For each $k \geqslant 1$, define the stopping times $\sigma_k$ and $\tau_k$ by
\begin{equation*}
    \sigma_{k} \triangleq \inf \big\{ s > \tau_{k-1}: \; \gamma_s \in \Clus_{\delta,\lambda}^t \big\}\;, \quad \tau_k \triangleq \inf \Big\{ s > \sigma_k: \; d(\gamma_s, \Clus_{\delta,\lambda}^t) > \frac{\lambda t^{4/3}}{2} \Big\}\;.
\end{equation*}
Let $\fC_k \in \Clus_{\delta,\lambda}^t$ be such that $\gamma_{\sigma_k} \in \overline{\fC_k}$. Let $m$ be the unique integer such that $\sigma_m < t$ and $\sigma_{m+1} \geqslant t$. Then the sequence of ordered clusters
\begin{equation*}
    \tT_{\delta,\lambda}^t(\gamma) \triangleq (\fC_1, \, \fC_2, \, \cdots, \, \fC_m) \in \big( \Clus_{\delta,\lambda}^t \big)^{\NN_0}
\end{equation*}
is called the (discrete) trajectory of $\gamma$ in the clusters $\Clus_{\delta,\lambda}^t$. 
\end{defn}





Before studying a generic Brownian scenario, we perform one more localisation on the Brownian trajectory.

\begin{prop} \label{prop:ongRtEst}
Let $N\geqslant1$ be given fixed. Then there exists a constant $C = C(\lambda,N)$ such that for $\P$-a.e. realisation of $\xi$, we have
\begin{equation*}
\EE_o \Big[ e^{\int_{0}^{t}\xi(W_{s})ds}; \, M^{t} \cap \left\{ |\tT_{\delta,\lambda}^t|> N \right\} \Big] \leqslant C(\lambda,N) \cdot \exp \Big[ \Big(2\mu_{0}\sqrt{K_{0}} -\frac{(\lambda N)^{2}}{72} \Big)t^{5/3} \Big]
\end{equation*}
for all sufficiently large $t$. 
\end{prop}

The main ingredient is the following induction estimate. 

\begin{lem} \label{lem:conditional_induction}
    Let $\Phi: [0,t] \rightarrow \RR$ be a non-decreasing function. Let $\{\sigma_k\}_{k \geqslant 1}$ be the stopping times with respect to $\Clus_{\delta,\lambda}^t$ as given in Definition~\ref{def:DiscRt}. Then we have
    \begin{equation*}
        \EE \Big[ \1_{\{\sigma_k < t\}} \, \Phi(t-\sigma_k) \Big] \leqslant \EE \Big[ \1_{\{\sigma_{k-1}<t\}} \, \big(\Phi * (\1_{[0, t-\sigma_{k-1}]} \cdot q_{\lambda,t}) \big)(t-\sigma
        _{k-1}) \Big]\;,
    \end{equation*}
    where
    \begin{equation} \label{eq:q_lambda}
        q_{\lambda,t}(u) = \frac{\lambda t^{4/3}}{6 \sqrt{\pi}} \, \1_{u>0} \, u^{-3/2} \, e^{-\frac{\lambda^2 t^{8/3}}{36 u}}\;.
    \end{equation}
\end{lem}
\begin{proof}
    By conditioning on $\fF_{\sigma_{k-1}}$, we have
    \begin{equation} \label{eq:conditiona_induction}
        \EE \Big[ \1_{\{\sigma_k < t\}} \, \Phi(t-\sigma_k) \Big] = \EE \Big[ \1_{\{\sigma_{k-1} < t\}} \, \EE \big( \1_{\{\sigma_k < t\}} \, \Phi(t-\sigma_k) | \fF_{\sigma_{k-1}} \big) \Big]\;.
    \end{equation}
    It suffices to show that
    \begin{equation} \label{eq:conditional_step_bound}
        \EE \big( \1_{\{\sigma_k < t\}} \, \phi(t-\sigma_k) | \fF_{\sigma_{k-1}} \big) \leqslant (\Phi * q_{\lambda,t})(t-\sigma_{k-1})\;.
    \end{equation}
    Note that the left hand side above equals
    \begin{equation*}
        \EE^{W_{\sigma_{k-1}}} \Big( \1_{\{\sigma_k - \sigma_{k-1} < t - \sigma_{k-1} \}} \cdot \Phi \big( (t - \sigma_{k-1}) - (\sigma_k - \sigma_{k-1}) \big) \Big)\;.
    \end{equation*}
    By definition, $\sigma_k$ is strictly larger than the first time after $\sigma_{k-1}$ that the Brownian motion $W$ is at least $\lambda t^{4/3}/2$ away from $W_{\sigma_{k-1}}$. Also note that the function $\Phi \big( (t-\sigma_{k-1}) - (\sigma_k - \sigma_{k-1}) \big)$ is \textit{non-increasing} in the variable $\sigma_k - \sigma_{k-1}$. Hence, by the strong Markov property, the desired bound \eqref{eq:conditional_step_bound} follows from a direct application of Proposition~\ref{prop:hitting_time} (with $s=t - \sigma_{k-1}$, $\Theta = \lambda t^{4/3}/2$ and $\alpha = 2/3$). The claim of the lemma then follows by plugging the bound \eqref{eq:conditional_step_bound} back into \eqref{eq:conditiona_induction}. 
\end{proof}

We are now ready to prove Proposition~\ref{prop:ongRtEst}.

\begin{proof}[Proof of Proposition~\ref{prop:ongRtEst}]
By \eqref{eq:XiUp}, on the event $M^t$, $\xi(W_s)$ is bounded by $2 \mu_0 \sqrt{K_0} t^{2/3}$ for all $s \in [0,t]$. Hence, we have
\begin{equation} \label{eq:LRXiUp}
\mathbb{E}\big[e^{\int_{0}^{t}\xi(W_{s})ds}; \, M^{t} \cap\{|\tT_{\delta,\lambda}^t|>N\} \big] \leqslant e^{2 \mu_0 \sqrt{K_0} t^{5/3}} \cdot \PP \big( |\tT_{\delta,\lambda}^t| > N \big)
\end{equation}
for all sufficiently large $t$. It remains to estimate the
probability $\PP ( |\tT_{\delta,\lambda}^t| > N )$.

Recall that $\sigma_{1},\sigma_{2},\dots$ denote the successive
visit times of the $\lambda$-clusters given in Definition~\ref{def:DiscRt}. Applying Lemma~\ref{lem:conditional_induction} once, we get
\begin{equation*}
    \PP \big( |\tT_{\delta,\lambda}^t| > N \big) = \EE \big( \1_{\{\sigma_N < t\}} \big) \leqslant \EE \Big[ \1_{\{\sigma_{N-1}<t\}} \, \Phi^{(1)} (t-\sigma_{N-1}) \Big]\;,
\end{equation*}
where
\begin{equation*}
    \Phi^{(1)}(t-\sigma_{N-1}) = \int_{0}^{t-\sigma_{N-1}} q_{\lambda,t}(u) du\;,
\end{equation*}
and $q_{\lambda,t}$ is the function given in \eqref{eq:q_lambda}. Note that the integral on the right hand side above is an increasing function in the variable $t-\sigma_{N-1}$, which allows another application of Lemma~\ref{lem:conditional_induction}. 

Each time one applies Lemma~\ref{lem:conditional_induction}, one updates the function $\Phi^{(k)}$ to $\Phi^{(k+1)}$, which is obtained from $\Phi^{(k)}$ by convolution with $q_{\lambda,t}$, and thus giving another increasing function. Hence, by repeatedly applying Lemma~\ref{lem:conditional_induction} $N$ times, we get
\begin{equation*}
    \PP \big( |\tT_{\delta,\lambda}^t| > N \big) \leqslant \int_{0 < u_1 + \cdots u_N < t} \; \Big( \prod_{j=1}^{N} q_{\lambda,t}(u_j) \Big) \; d u_1 \cdots d u_N\;.
\end{equation*}
By a change of variable $v_j \triangleq u_j / (\lambda^2 t^{8/3})$, one obtains
\begin{equation} \label{eq:LRPf2}
    \PP \big( |\tT_{\delta,\lambda}^t| > N \big) \leqslant (6 \sqrt{\pi})^{-N} \int_{0 < \sum_j v_j < \frac{1}{\lambda^2 t^{5/3}}} \Big( \prod_{j=1}^{N} v_j^{-3/2} \Big) \cdot \exp \bigg( -\frac{1}{36} \sum_{j=1}^{N} \frac{1}{v_j} \bigg) d \mathbf{v}\;,
\end{equation}
where we used the notation $d \mathbf{v} = d v_1 \dots d v_N$. By the harmonic-arithmetic mean inequality and the range of integration, we have
\begin{equation*}
\frac{N}{1/v_{1}+\cdots+1/v_{N}} \leqslant \frac{v_{1}+\cdots+v_{N}}{N}\leqslant\frac{1}{\lambda^2 t^{5/3} N}\;,
\end{equation*}
which in turn implies that the quantity on the exponential satisfies
\begin{equation*}
    -\frac{1}{36} \sum_{j=1}^{N} \frac{1}{v_j} = -\frac{1}{72} \sum_{j=1}^{N} \frac{1}{v_j} - \frac{1}{72} \sum_{j=1}^{N} \frac{1}{v_j} \leqslant -\frac{1}{72} \sum_{j=1}^{N} \frac{1}{v_j} - \frac{(\lambda N)^2}{72} \cdot t^{5/3}\;.
\end{equation*}
Plugging it back to \eqref{eq:LRPf2} and enlarging the domain of integration to $[0,t]^N$, we get
\begin{equation} \label{eq:LRPf3}
\PP \big( |\tT_{\delta,\lambda}^t| > N \big) \leqslant \exp \bigg( -\frac{(\lambda N)^2}{72} \cdot t^{5/3} \bigg) \cdot \bigg( \frac{1}{6 \sqrt{\pi}} \int_{0}^{1/(\lambda^2 t^{5/3})} v^{-\frac{3}{2}} \cdot e^{-\frac{1}{72 v}} dv \bigg)^N\;.
\end{equation}
The integral term on the right hand side above is uniformly bounded by $1$ for all $\lambda$ and $t$ with $\lambda^2 t^{5/3} \gg 1$. The conclusion of the proposition then follows by combining \eqref{eq:LRXiUp} and \eqref{eq:LRPf3}. 
\end{proof}

\begin{rem}
In the above proof, technically the visit of the first cluster $\fC_1$ is different from the rest, since one does not know if $d(o,\fC_1) > \lambda t^{4/3}$. However, since $\xi(x)>\delta t^{2/3}$ for any $x \in \fC_1$, one knows from \eqref{eq:XiUp} that $d(o, \fC_1)>(\delta/2\mu_{0})^{2}t^{4/3}$. To make the argument for the first visit consistent with the rest, one could further impose that $\lambda<(\delta/2\mu_{0})^{2}$.  
\end{rem}

\subsection{The main estimate on generic routes}
\label{sec:main_estimate_upper}

In this subsection, we estimate the contribution from a generic discrete route. In what follows, let
\begin{equation*}
    \eta = \eta (\delta) = \delta^{20}\;, \qquad \eta' = \eta' (\delta) = \delta^8\;.
\end{equation*}
By Proposition~\ref{prop:ongRtEst}, there exists $C_0>0$ depending on $\mu_0$ and $K_0$ only such that
\begin{equation} \label{eq:full_localisation}
    \E \Big[ e^{\int_{0}^{t} \xi(W_s) ds}; \; (M^t)^c \cup \big\{ |\tT_{\delta,\eta'}^t| > C_0 / \eta' \big\} \Big] \leqslant e^{- c t^{5/3}}
\end{equation}
for all sufficiently large $t$. Hence, we may restrict ourselves to the intersection of the events $M^t \cap \{ |\tT_{\delta,\eta'}^t| \leqslant C_0/\eta' \}$, and consider the quantity
\begin{equation*}
    \begin{split}
    &\phantom{111}\E \Big[ e^{\int_{0}^{t} \xi(W_s) ds}; \; M^t \cap \big\{ |\tT_{\delta,\eta'}^t| \leqslant C_0/\eta' \big\}  \Big]\\
    &= \sum_{\vec{\fS} \in (\Clus_{\delta,\eta}^t)^{\NN_0}} \E \Big[ e^{\int_{0}^{t} \xi(W_s) ds}; \; M^t \cap \big\{ |\tT_{\delta,\eta'}^t| \leqslant C_0/\eta' \big\} \cap \big\{ \tT_{\delta,\eta}^t = \vec{\fS} \big\} \Big]\;.
    \end{split}
\end{equation*}
Note that the sum is taken over all ordered finite sequences in $\Clus_{\delta,\eta}^t$ (and not $\Clus_{\delta,\eta'}^t$). 

It turns out that the other restriction $\{|\tT_{\delta,\eta'}^t| \leqslant C_0 / \eta'\}$ puts further constraints on the above sum over $\vec{\fS}$. To better characterise this further constraint, We first introduce the notion of a reduced $\Clus_{\delta,\eta}^t$ sequence. 

\begin{defn} [Reduction operation]
\label{def:reduced_cluster}
    Let $\vec{\fS} \in (\Clus_{\delta,\eta}^t)^{\NN_0}$, and $m^* \leqslant |\vec{\fS}|$ be such that
    \begin{equation} \label{eq:furthest_cluster}
        \sup_{x \in \vec{\fS}_{m^*}} d(x,o) = \sup_{1 \leqslant k \leqslant |\vec{\fS}|} \; \sup_{x \in \vec{\fS}_k} d(x,o)\;.
    \end{equation}
    Let $j_0 = 0$. For each $\ell$, let
    \begin{equation*}
        j_{\ell+1} \triangleq \sup \big\{ k \leqslant m^*\,: \; \vec{\fS}_k = \vec{\fS}_{j_{\ell}+1}  \big\}\;.
    \end{equation*}
    In other words, $j_1$ is the largest number before $m^*$ with $\vec{\fS}_{j_1} = \vec{\fS}_1$, $j_2$ is the largest number before $m^*$ with $\vec{\fS}_{j_2} = \vec{\fS}_{j_1 + 1}$, etc. This procedure necessarily ends with finitely many steps, say $\bar{m}$. We call
    \begin{equation*}
        \rR({\vec{\fS}}) \triangleq (\vec{\fS}_{j_1}, \, \dots, \, \vec{\fS}_{j_{\bar{m}}})
    \end{equation*}
    the \textit{reduced sequence} of $\vec{\fS}$ in $(\Clus_{\delta,\lambda}^t)^{\NN_0}$. 
\end{defn}

\begin{rmk}
    The reduction operation is not uniquely defined since the choice of $m^*$ satisfying \eqref{eq:furthest_cluster} may not be unique (but it is $\P$-a.s. unique). On the other hand, given $m^*$, the operation $\rR$ is then uniquely defined. 
\end{rmk}

\begin{lem} \label{lem:reduced_cluster_length}
    There exists $C>0$ such that
    \begin{equation*}
        |\rR \big(\tT_{\delta,\eta}^t(\gamma) \big)| \leqslant C \delta^{-12}
    \end{equation*}
    uniformly over all paths $\gamma: [0,t] \rightarrow \HH^d$ with the restriction $|\tT_{\delta,\eta'}^t(\gamma)| \leqslant C_0 / \eta'$. 
\end{lem}
\begin{proof}
    Suppose $\tT_{\delta,\eta}^t = \vec{\fS}$, and let $\rR(\vec{\fS})$ be a reduction of $\vec{\fS}$ in $(\Clus_{\delta,\eta}^t)^{\NN_0}$ as in Definition~\ref{def:reduced_cluster}. By construction, $\rR(\vec{\fS})_k$ are different for all $k \leqslant |\rR(\vec{\fS})|$. Hence, the number of different clusters in $\Clus_{\delta,\eta}^t$ that $\gamma$ passes through is at least $|\rR(\vec{\fS})|$. 

    On the other hand, by Lemma~\ref{lem:ClusterProperty}, each cluster in $\Clus_{\delta,\eta'}^t$ contains at most $L_\delta = \delta^{-4}$ different (smaller) clusters in $\Clus_{\delta,\eta}^t$. Since $|\tT_{\delta,\eta'}^t| \leqslant C_0 / \eta'$, the number of different clusters that $\gamma$ passes through can at most be $L_\delta \, |\tT_{\delta,\eta'}^t| \lesssim \delta^{-12}$. Hence, we necessarily have $|\rR(\vec{\fS})| \leqslant C  \delta^{-12}$. This completes the proof. 
\end{proof}

Let $\gG_{\delta,\eta,\eta'}^t \subset (\Clus_{\delta,\eta}^t)^{\NN_0}$ be the set
\begin{equation*}
    \gG_{\delta,\eta,\eta'}^t = \bigg\{ \vec{\fS} \in (\Clus_{\delta,\eta}^t)^{\NN_0}\,: \; \bigcup_{k=1}^{|\vec{\fS}|} \vec{\fS}_k \subset Q_{K_0 t^{4/3}} \; \text{and} \; |\rR(\vec{\fS})| < \delta^{-13} \bigg\}\;.
\end{equation*}
By Lemma~\ref{lem:reduced_cluster_length}, we have
\begin{equation*}
    \E \Big[ e^{\int_{0}^{t} \xi(W_s) ds}; \; M^t \cap \big\{ |\tT_{\delta,\eta'}^t| \leqslant C_0/\eta' \big\} \Big] \leqslant \sum_{\vec{\fS} \in \gG_{\delta,\eta,\eta'}^t} \E \Big[ e^{\int_{0}^{t} \xi(W_s) ds} \; \1_{\{\tT_{\delta,\eta}^t = \vec{\fS}\}} \Big]\;.
\end{equation*}
The mains statement in this subsection is the following proposition. 

\begin{prop} \label{prop:KeyEst}
For every $\alpha \in (0,1)$, $\delta>0$ and $\mu > \mu_0$, there exists $C = C(\delta,\eta,\eta',\alpha,\mu)$ such that
\begin{equation} \label{eq:KeyEst}
    \EE \Big[ e^{\int_{0}^{t} \xi(W_s) ds} \, \1_{\{\tT_{\delta,\eta}^t = \vec{\fS}\}} \Big] \leqslant C^{|\vec{\fS}|} \exp \Big( \big( L^* + \delta + \mu \sqrt{K_0} - \alpha \mu_0 \sqrt{K_0 - \delta^2} \big) t^{5/3} \Big)
\end{equation}
for all large $t$ uniformly in sequences $\vec{\fS}$ in $\gG_{\delta,\eta,\eta'}^t$.
\end{prop}

In what follows, we develop the main steps towards proving Proposition
\ref{prop:KeyEst}.

\subsubsection{Decomposing $\int_{0}^{t}\xi(W_{s})ds$ into ``excursion'' and ``staying'' parts}

Suppose $\vec{\mathfrak{S}} \in (\Clus_{\delta,\eta}^t)^{\NN_0}$ with $|\vec{\fS}| = m$. For this cluster sequence $\vec{\fS}$, let $\tau_0 \triangleq 0$. For each $1 \leqslant k \leqslant m$, define the stopping times $\sigma_k = \sigma_k(\vec{\fS})$ and $\tau_k = \tau_k(\vec{\fS})$ recursively by
\begin{equation} \label{eq:stopping_time_cluster_specific}
    \begin{split}
    \sigma_k &\triangleq t \wedge \inf \big\{ s > \tau_{k-1}: \; W_s \in \vec{\fS}_k \big\}\;,\\
    \tau_k &\triangleq t \wedge \inf \Big\{ s > \sigma_k: \; d(W_s, \vec{\fS}_k) > \frac{\eta \, t^{4/3}}{2} \Big\}\;, \qquad \tau_m \triangleq t\;.
    \end{split}
\end{equation}
Note that we have made an abuse of notation with Definition~\ref{def:DiscRt} since the stopping times here are associated with the fixed sequence $\vec{\fS}$. Let
\begin{equation} \label{eq:K*Def}
K \triangleq t^{-4/3} \, \sup \Big\{d(x,o): x \in \bigcup_{k=1}^{m} \vec{\fS}_k \Big\}\;.
\end{equation}
Note that $K = K_t$ also depends on $t$, but by Lemma~\ref{lem:XiGrowth}, it satisfies
\begin{equation*}
    \Big( \frac{\delta}{\mu_0} \Big)^2 \leqslant K \leqslant K_0
\end{equation*}
for all sufficiently large $t$. By definition of $\Clus_{\delta,\eta}^t$ and the stopping times in \eqref{eq:stopping_time_cluster_specific} as well as Lemma~\ref{lem:XiGrowth}, we have
\begin{equation*}
    \xi(W_s) \leqslant
    \begin{cases}
\delta t^{2/3}\;, & s \in [\tau_{k-1},\sigma_k]\\
\mu \sqrt{K} t^{2/3}\;, & s \in [\sigma_{k-1}, \tau_k]
\end{cases}
\qquad \text{on} \; \big\{ \tT_{\delta,\eta}^t = \vec{\fS} \big\}
\end{equation*}
for every $\mu > \mu_0$ and all sufficiently large $t$. As a consequence, we deduce
\begin{equation*}
    \begin{split}
    \int_{0}^{t} \xi(W_s) ds &= \sum_{k=1}^{m} \int_{\tau_{k-1}}^{\sigma_k} \xi(W_s) ds + \sum_{k=1}^{m} \int_{\sigma_{k-1}}^{\tau_k} \xi(W_s) ds\\
    &\leqslant \delta t^{5/3} + \mu \sqrt{K} t^{2/3} \sum_{k=1}^{m-1} (\tau_k - \sigma_{k-1}) \qquad \text{on} \; \{\tT_{\delta,\eta}^t = \vec{\fS}\}\;,
    \end{split}
\end{equation*}
for every $\mu > \mu_0$, $\delta < \delta_0$ and all sufficiently large $t$. Exponentiate the above and restricting to the event $\{\tT_{\delta,\eta}^t = \vec{\fS}\}$, we get the bound
\begin{equation} \label{eq:KeyEstPf1}
    \EE \Big[ e^{\int_{0}^{t} \xi(W_s) ds} \, \1_{\{\tT_{\delta,\eta}^t = \vec{\fS}\}} \Big] \leqslant e^{\delta t^{5/3}} \EE \bigg[ \exp \Big( \mu \sqrt{K} t^{2/3} \sum_{k=1}^{m} \big( \tau_k - \sigma_{k-1} \big) \Big) \, \1_{\{\tT_{\delta,\eta}^t = \vec{\fS}\}} \bigg]
\end{equation}
for every $\mu > \mu_0$ and all sufficiently large $t$.

\subsubsection{Estimating the ``staying'' part}

Now our aim is to estimate the expectation on the right hand side of \eqref{eq:KeyEstPf1} above. For each $k=1, \dots, m$ and $\alpha \in (0,1)$, let
\begin{equation*}
    D_k \triangleq \text{dist} \big( \vec{\fS}_{k-1}, \vec{\fS}_k \big) - \frac{\eta \, t^{4/3}}{2}\;, \qquad q_k^\alpha(u) = \frac{\alpha D_k}{2 \sqrt{\pi}} \, u^{-3/2} \, e^{-\frac{\alpha^2 D_k^2}{4u}}\;,
\end{equation*}
Let
\begin{equation*}
    \widehat{\sigma}_k \triangleq \inf \big\{ s>0\,: \; d(W_{\tau_{k-1}+s}, \; W_{\tau_{k-1}}) > D_k \big\}\;.
\end{equation*}
Then we have $\widehat{\sigma}_k \leqslant \sigma_k - \tau_{k-1}$ for every $k$. To estimate the ``jumping times" from $\partial \widehat{\fC_{k-1}}$ to $\fC_k$, we rewrite
\begin{equation} \label{eq:staying_to_jumping}
    \sum_{k=1}^{m} \big( \tau_k - \sigma_{k-1} \big) = t - \sum_{k=1}^{m} \big( \sigma_k - \tau_{k-1} \big) \leqslant t - \sum_{k=1}^{m} \widehat{\sigma}_k\;.
\end{equation}
Also, note that
\begin{equation} \label{eq:indicator_expansion}
    \1_{\{\tT_{\delta,\eta}^t = \vec{\fS}\}} = \prod_{k=1}^{m} \1_{\{\sigma_k < t\}} \leqslant \prod_{k=1}^{m} \1_{\{\widehat{\sigma}_k < t - \tau_{k-1}\}}\;.
\end{equation}
Plugging \eqref{eq:staying_to_jumping} and \eqref{eq:indicator_expansion} back into \eqref{eq:KeyEstPf1}, we have
\begin{equation} \label{eq:KeyEstPf2}
    \EE \Big[ e^{\int_{0}^{t} \xi(W_s) ds} \, \1_{\{\tT_{\delta,\eta}^t = \vec{\fS}\}} \Big] \leqslant e^{(\delta + \mu \sqrt{K}) t^{5/3}} \, \EE \Big[ \prod_{k=1}^{m} \Big( \phi(\widehat{\sigma}_k) \, \1_{\{ \widehat{\sigma}_k < t - \tau_{k-1} \}} \Big) \Big]\;,
\end{equation}
where $\phi(\sigma) = e^{- \mu \sqrt{K} t^{2/3} \sigma}$ is decreasing in $\sigma$. We first condition the right hand side above on $\fF_{\tau_{m-1}}$. Note that by Proposition~\ref{prop:hitting_time}, we have
\begin{equation*}
    \begin{split}
    \EE \Big( \phi(\widehat{\sigma}_m) \, \1_{\{\widehat{\sigma}_m < t - \tau_{m-1}\}} | \fF_{\tau_{m-1}} \Big) &= \EE^{W_{\tau_{m-1}}} \Big( \phi(\widehat{\sigma}_m) \, \1_{\{\widehat{\sigma}_m < t - \tau_{m-1}\}} \Big)\\
    &\leqslant \int_{0}^{t - \tau_{m-1}} q_m^\alpha (u) \phi(u) du\\
    &\leqslant \int_{0}^{t - \tau_{m-2} - \widehat{\sigma}_{m-1}} q_m^\alpha (u) \phi(u) du\;,
    \end{split}
\end{equation*}
where we used $\widehat{\sigma}_{m-1} \leqslant \sigma_{m-1} - \tau_{m-2} \leqslant \tau_{m-1} - \tau_{m-2}$ in the last inequality. Hence, the expectation part of right hand side of \eqref{eq:KeyEstPf2} is bounded by
\begin{equation*}
    \EE \bigg[ \bigg( \int_{0}^{t-\tau_{m-2} - \widehat{\sigma}_{m-1}} q_m^\alpha(u) \phi(u) du \bigg) \cdot \prod_{k=1}^{m-1} \Big( \phi(\widehat{\sigma}_k) \, \1_{\{ \widehat{\sigma}_k < t - \tau_{k-1} \}} \Big) \bigg]\;.
\end{equation*}
Note that both the integral and $\phi(\widehat{\sigma}_{m-1})$ are decreasing in the variable $\widehat{\sigma}_{m-1}$, and hence so is their product. This allows us to further condition on $\fF_{\tau_{m-2}}$ and apply Proposition~\ref{prop:hitting_time} to get another decreasing function in $\widehat{\sigma}_{m-3}$. Proceeding in this way (by successively conditioning on $\fF_{\tau_k}$ for $k=m-1, \dots, 1$), we finally get
\begin{equation*} 
    \EE \bigg[ \prod_{k=1}^{m} \Big( \phi(\widehat{\sigma}_k) \, \1_{\{ \widehat{\sigma}_k < t - \tau_{k-1} \}} \Big) \bigg] \leqslant \int_{\sS_m(t)} \Big( \prod_{k=1}^{m} q_k^{\alpha}(u_k) \Big) \cdot \exp \Big( - \mu \sqrt{K} t^{2/3} \sum_{k=1}^{m} u_k \Big) d {\bf u}\;,
\end{equation*}
where
\begin{equation} \label{eq:range_integration}
    \sS_m(t) = \big\{ \mathbf{u} = (u_1, \dots, u_m): 0 < u_1 + \cdots + u_m < t  \big\}\;.
\end{equation}
Using the expression of $q_k^\alpha$ and plugging it back to \eqref{eq:KeyEstPf2}, we get
\begin{equation} \label{eq:IDef}
    \begin{split}
    \EE \Big[ e^{\int_{0}^{t} \xi(W_s) ds} \, &\1_{\{\tT_{\delta,\eta}^t = \vec{\fS}\}} \Big] \leqslant e^{\delta t^{5/3}} \int_{\sS_m(t)} \bigg[ \exp \bigg( \mu \sqrt{K} t^{2/3} \Big( t - \sum_{k=1}^{m} u_k \Big)  - \sum_{k=1}^{m} \frac{\alpha^4 D_k^2}{4 u_k} \bigg)\\
    &\cdot \prod_{k=1}^{m} \bigg( \frac{\alpha D_k} {2\sqrt{\pi}} \cdot u_k^{-3/2} \cdot \exp \Big(-\frac{\alpha^2 (1-\alpha^2) D_k^2}{4 u_k} \Big) \bigg)  \bigg] \, d {\bf u}\;,
    \end{split}
\end{equation}
where we have further split $\alpha^2 = \alpha^4 + \alpha^2 (1-\alpha^2)$.

\subsubsection{Taking out the main factor}

We now analyse the term on the first line of the integrand on the right hand side of \eqref{eq:IDef} above. This is the source of the $t^{5/3}$ power and the constant $L^*$. We first give two preliminary ingredients. 

\begin{lem} \label{lem:ElemIneq}
We have
\begin{equation} \label{eq:ElemIneq}
\sum_{k=1}^{m}\frac{D_k^{2}}{u_k} \geqslant \frac{(D_1 + \dots + D_m)^{2}}{u_1 + \dots + u_m}
\end{equation}
for every integer $m \in \NN$ and all positive real numbers $D_1, \dots, D_m$ and $u_1, \dots, u_m$. 
\end{lem}
\begin{proof}
We first consider $m=2$. Let $a = \frac{D_1}{D_1 + D_2}$ and $b = \frac{u_1}{u_1 + u_2}$. Then we have
\begin{equation*}
    \frac{D_1^2}{u_1} + \frac{D_2^2}{u_2} \geqslant \frac{(D_1 + D_2)^2}{u_1 + u_2} \iff \frac{a^2}{b}+\frac{(1-a)^2}{(1-b)} \geqslant 1 \iff (a-b)^2 \geqslant 0\;.
\end{equation*}
The case for general $m$ then follows by induction. 
\end{proof}

\begin{lem} \label{lem:Trian}
We have
\begin{equation*}
    \sum_{k=1}^{m} D_k \geqslant (K-\delta^2) \cdot t^{4/3}\;.
\end{equation*}
\end{lem}
\begin{proof}
    Recall $\tT_{\delta,\eta}^t = \vec{\fS}$ with $|\vec{\fS}| = m$. Let $m^* \leqslant m$ satisfying \eqref{eq:furthest_cluster} and $\rR(\tT_{\delta,eta}^t) = \rR(\vec{\fS})$ be the reduced cluster constructed as in Definition~\ref{def:reduced_cluster} based on the above $m^*$. Suppose $\rR(\tT_{\delta,\eta}^t)$ has the form
    \begin{equation*}
        \rR(\tT_{\delta,\eta}^t) = (\fS_{j_1}, \, \dots, \, \fS_{j_{\bar{m}}})\;.
    \end{equation*}
    By construction, we have $j_{\bar{m}} = m^*$, and $\vec{\fS}_{j_{\ell+1}} = \vec{\fS}_{j_\ell + 1}$ for every $\ell$. The latter in particular implies
    \begin{equation} \label{eq:distance_reduced_cluster}
        D_{j_{\ell+1}} = {\rm dist} \, (\vec{\fS}_{j_\ell}, \vec{\fS}_{j_{\ell+1}}) - \frac{\eta \, t^{4/3}}{2}\;.
    \end{equation}
    Now, for each $\ell \leqslant \bar{m}$, take $x_\ell \in \overline{\fC_{j_\ell}}$ with $x_0 = o$. By triangle inequality and Assertion 1 in Lemma~\ref{lem:ClusterProperty} on the diameter of the clusters in $\Clus_{\delta,\eta}^t$, we have
    \begin{equation*}
        d(x_\ell, x_{\ell+1}) \leqslant {\rm dist} \big( \vec{\fS}_{j_\ell}, \\vec{\fS}_{j_{\ell+1}} \big) + {\rm diam} (\vec{\fS}_{j_\ell}) + {\rm diam} (\vec{\fS}_{j_{\ell+1}}) \leqslant D_{j_{\ell+1}} + \frac{\eta \, t^{4/3}}{2} + 8 L_\delta \eta t^{4/3}\;.
    \end{equation*}
    Summing $\ell$ from $0$ to $\bar{m}-1$, and using the bound for $\bar{m}$ in Lemma~\ref{lem:reduced_cluster_length} and that all the $j_\ell$'s different for different $\ell$, we get
    \begin{equation*}
        \begin{split}
        K t^{4/3} \leqslant d(o, x_{\bar{m}}) + L_\delta \, \eta \, t^{4/3} &\leqslant \sum_{\ell=0}^{\bar{m}-1} d(x_\ell, x_{\ell+1}) +  L_\delta \, \eta \, t^{4/3}\\
        &\leqslant \sum_{\ell=1}^{\bar{m}} D_{j_\ell} + 10 \bar{m} \, L_\delta \, \eta \, t^{4/3} \leqslant \sum_{k=1}^{m} D_{k} + C \, \delta^3 \, t^{4/3}\;.
        \end{split}
    \end{equation*}
    The constant $C$ can be removed by reducing $\delta^3$ to $\delta^2$ if $\delta$ is sufficiently small. This completes the proof. 
\end{proof}

We are now ready to state the main bound. 

\begin{prop} \label{prop:L_star}
    We have
    \begin{equation} \label{eq:upper_bound_L_star}
        \mu \sqrt{K} t^{2/3} \Big( t - \sum_{k=1}^{m} u_k \Big) - \sum_{k=1}^{m} \frac{\alpha^4 D_k^2}{4 u_k} \leqslant \big( L^* + \mu \sqrt{K} - \alpha \mu_0 \sqrt{K-\delta^2} \big) \cdot t^{5/3}
    \end{equation}
    uniformly over ${\bf u} = (u_1, \dots, u_m) \in \sS_m(t)$, $\theta \in (0,1)$ and all sufficiently large $t$, where $L^*$ is the constant defined in \eqref{eq:OptimalExp}, and $K = K_t$ is given in \eqref{eq:K*Def}. 
\end{prop}
\begin{proof}
    Combining Lemmas~\ref{lem:ElemIneq} and~\ref{lem:Trian}, we have
    \begin{equation*}
        \sum_{k=1}^{m} \frac{D_k^2}{u_k} \geqslant \frac{(D_1 + \cdots D_m)^2}{u_1 + \cdots u_m} \geqslant \frac{(K-\delta^2)^2 \; t^{8/3}}{u_1 + \cdots + u_m}\;.
    \end{equation*}
    Writing $\sum_{k=1}^{m} u_k = \theta t$ for some $\theta \in [0,1]$ and recalling the definition of $L^*$ in \eqref{eq:OptimalExp}, we can bound the left hand side of \eqref{eq:upper_bound_L_star} by
    \begin{equation*}
        \begin{split}
        \mu \sqrt{K} t^{2/3} \Big( t - \sum_{k=1}^{m} u_k \Big) - \sum_{k=1}^{m} \frac{\alpha^4 D_k^2}{4 u_k} &= \Big( \mu \sqrt{K} (1-\theta) - \frac{\big( \alpha^2 (K-\delta^2) \big)^2}{4 \theta} \Big) \, t^{5/3}\\
        &\leqslant \Big( L^* + (1-\theta) \big( \mu \sqrt{K} -  \mu_0 \sqrt{\alpha^2 (K-\delta^2)} \big) \Big) \, t^{5/3}\;.
        \end{split}
    \end{equation*}
    This completes the proof of Proposition~\ref{prop:L_star}. 
\end{proof}

\subsubsection{Proof of Proposition~\ref{prop:KeyEst}}

\begin{lem} \label{lem:remainder_integral}
    There exists $c>0$ independent of $m$, $\eta$ and $t$ such that
    \begin{equation*}
        \int_{\sS_m(t)} \prod_{k=1}^{m} \bigg( \frac{\alpha D_k} {2\sqrt{\pi}} \cdot u_k^{-3/2} \cdot \exp \Big(-\frac{\alpha^2 (1-\alpha^2) D_k^2}{4 u_k} \Big) \bigg) \; d {\bf u} \leqslant e^{- c m \eta^2 t^{5/3}}
    \end{equation*}
    for all $m \geqslant 1$, $\eta \in (0,1)$ and $t$ sufficiently large such that $\eta^2 t^{5/3} \gg 1$. 
\end{lem}
\begin{proof}
    Since $\eta t^{4/3}/2 \leqslant D_k \leqslant 2 K_0 t^{4/3}$ for every $k$, we have the bound (enlarging the domain of integration to $[0,t]^m$)
    \begin{equation*}
        \begin{split}
        &\phantom{111}\int_{\sS_m(t)} \prod_{k=1}^{m} \bigg( \frac{\alpha D_k} {2\sqrt{\pi}} \cdot u_k^{-3/2} \cdot \exp \Big(-\frac{\alpha^2 (1-\alpha^2) D_k^2}{4 u_k} \Big) \bigg) \; d {\bf u}\\
        &\leqslant \bigg( \frac{\alpha K_0}{\sqrt{\pi}} \cdot t^{4/3} \int_{0}^{t} u^{-3/2} \, \exp \Big( -\frac{\alpha^2 (1-\alpha^2) \eta^2 \, t^{8/3}}{16 u} \Big) du \bigg)^m\;.
        \end{split}
    \end{equation*}
    Making the change of variable $v \triangleq u / (\eta^2 \, t^{8/3})$, we see the right hand side above equals
    \begin{equation*}
        C^m \, \bigg( \int_{0}^{1/(\eta^2 t^{5/3})} v^{-3/2} e^{-\frac{c}{v}} dv \bigg)^m
    \end{equation*}
    for some $c, C>0$ independent of $\eta$ and $t$. The claim then follows. 
\end{proof}

\begin{proof} [Proof of Proposition~\ref{prop:KeyEst}]
    The conclusion of Proposition~\ref{prop:KeyEst} follows immediately by combining Proposition~\ref{prop:L_star} and Lemma~\ref{lem:remainder_integral} and plugging them back into the expression \eqref{eq:IDef}. 
\end{proof}

\subsection{Completing the proof of Theorem \ref{thm:MainUp}}

Now we are in a position to complete the proof of the main upper bound. 

\begin{proof}[Proof of Theorem \ref{thm:MainUp}]

Combining \eqref{eq:full_localisation} and Proposition~\ref{prop:KeyEst}, we get
\begin{equation} \label{eq:upper_sum}
    u(t,o) \leqslant e^{- c' \,  t^{5/3}} + e^{(L^* + \delta + \mu \sqrt{K_0} - \alpha \mu_0 \sqrt{K_0 - \delta^3}) t^{5/3}} \, \sum_{\vec{\fS} \in \gG_{\delta,\eta,\eta'}^t} e^{- c \, |\vec{\fS}| \, \eta^2 \, t^{5/3}}\;.
\end{equation}
Note that clusters in $\Clus_{\delta,\eta}^t$ are at least $\eta t^{4/3}$ away from each other, and all are contained in $Q_{K_0 t^{4/3}}$. Hence, the total number of different clusters in $\Clus_{\delta,\eta}^t$ is bounded by $C e^{(d-1) K_0 t^{4/3}}$ for some $C$ depending on $\delta$ and $d$. As a consequence, the total number of $\vec{\fS} \in \gG_{\delta,\eta,\eta'}^t$ with $|\vec{\fS}| = m$ is bounded by $e^{C m t^{4/3}}$ for some $C>0$ independent of $\eta$ and $t$ as long as $\eta \, t^{4/3} \geqslant 1$. Plugging this back to the sum \eqref{eq:upper_sum}, we deduce that for every $\delta > 0$ sufficiently small, $\mu > \mu_0$ and $\alpha \in (0,1)$, we have
\begin{equation*}
    \underset{t\rightarrow\infty}{\overline{\lim}}\frac{1}{t^{5/3}}\log u(t,o) \leqslant L^* + \delta + \mu \sqrt{K_0} - \alpha \mu_0 \sqrt{K_0 - \delta^2}\;.
\end{equation*}
Since $\delta > 0$ is arbitrarily small, $\mu > \mu_0$ and $\alpha < 1$ can also be arbitrarily close to $\mu_0$ and $1$, the proof of Theorem~\ref{thm:MainUp} is thus complete. 
\end{proof}

\section*{Acknowledgement}

XG is supported by ARC grant DE210101352. WX acknowledges the support from the Ministry of Science and Technology via the National Key R\&D Program of China (no.2023YFA1010102) and National Science Foundation China via the Standard Project Grant (no.12171008) and the Key Project Grant (no.12595280 and 12595281). 

The majority of the work was done when WX was at Beijing International Center for Mathematical Research (BICMR) at Peking University, which provides a stimulating environment for mathematical research. 

The authors thank Stephen Muirhead for valuable discussions and suggestions which lead to various improvements of the current work.

\begin{appendices}

\section{Proof of Lemma~\ref{lem:LDP}}
\label{sec:LDP}

It is convenient to re-parametrise everything on the time interval $[0,1]$. For $\omega: [0,1] \rightarrow \HH^d$, define its energy $\eE$ by
\begin{equation*}
    \eE (\omega) \triangleq
    \begin{cases}
    \int_{0}^{1}|\dot{\omega}_u|^2 du, & \text{if }|\dot{\omega}|\in L^{2}([0,1]);\\
    +\infty, & \text{otherwise}.
    \end{cases}
\end{equation*}
We recall the following classical large deviations property for Brownian motion(cf. \cite{FW98}).

\begin{lem} \label{lem:LDPBM}
Let $F$ be a closed subset of the path space 
\begin{equation*}
    \Omega \triangleq \{\omega:[0,1]\rightarrow \HH^d \ |\ \omega\text{ continuous,}\ \omega_{0}=o\}
\end{equation*}
with respect to the uniform topology. Then we have 
\begin{equation*}
    \underset{s\rightarrow0^{+}}{\overline{\lim}} s \log \PP_o \big( W^s \in F \big) \leqslant -\frac{1}{4} \inf_{\omega \in F} \eE(\omega)\;,
\end{equation*}
where $W^s$ is the time-parametrised Brownian motion given by $W^s_u \triangleq W_{su}$ with $u \in [0,1]$. 
\end{lem}

Recall from Lemma~\ref{lem:LDPBM} that $\alpha: [0,s] \rightarrow \HH^d$ is the geodesic of fixed length $L$ starting from $o$. Let $\alpha^s: [0,1] \rightarrow \HH^d$ be defined by $\alpha^s_u \triangleq \alpha_{su}$. Then $\alpha^s$ is independent of $s$. We choose $F$ to be the closed subset
\begin{equation} \label{eq:closed_subset}
    F = \Big\{ \omega \in \Omega: \sup_{u \in [0,1]} d(\omega_u, \alpha^s_u) \geqslant \eps\,, \; \; d(\omega_1, \alpha^s_1) \leqslant \zeta \Big\}\;.
\end{equation}
We need to demonstrate an effective lower bound for the energy functional $\eE(\omega)$ on $F$. This is given by the lemma below whose proof relies on the assumption of non-positive curvature.

\begin{lem} \label{lem:EneLow}
We have
\begin{equation} \label{eq:EneLow}
\eE(\omega) \geqslant L^2 + \frac{\eps^{2}}{32} - 2 \zeta \, L
\end{equation}
for all $\omega \in F$ defined in \eqref{eq:closed_subset}, where $L$ is the length of $\alpha^s$. 
\end{lem}
\begin{proof}
We first recall the following convexity property of geodesics in the
hyperbolic space (\cite[Theorem 4.8.2]{Jos05}). Let $\alpha,\beta:[0,T] \rightarrow \HH^d$ be geodesics. Then we have 
\begin{equation} \label{eq:GeoConv}
\sup_{u \in [0,T]} d(\alpha_u, \, \beta_u) \leqslant \max\big\{ d(\alpha_0, \, \beta_0), \; d(\alpha_T, \, \beta_T) \big\}\;.
\end{equation}
Now fix $\omega\in F$. Let $\beta:[0,1] \rightarrow \mathbb{H}^{d}$
be the geodesic from $o$ to $\omega_1$. Using triangle inequality and then applying \eqref{eq:GeoConv} to the geodesics $\alpha^s$ and $\beta$, we have
\begin{equation*}
    \sup_{u \in [0,1]} d(\omega_u,\beta_u) \geqslant \sup_{u \in [0,1]} d(\omega_u, \alpha_u^s) - \sup_{u \in [0,1]} d(\alpha^s_u, \beta_u) \geqslant \frac{\eps}{3} - \zeta(t) > \frac{\eps}{4}\;,
\end{equation*}
where in the last step we also used the assumption that $\zeta \ll \eps$. 

Fix any $\tau \in (0,1)$ such that $d(\omega_\tau, \beta_\tau) > \frac{\eps}{4}$. Let $\bar{\omega}$ be the piecewise geodesic
\begin{equation*}
    \bar{\omega}_u \triangleq
    \begin{cases}
\text{geodesic from \ensuremath{o} to \ensuremath{\omega_\tau},} & r \in [0,\tau];\\
\text{geodesic from }\omega_{v}\ \text{to }\omega_{1}, & r \in [\tau,1].
\end{cases}
\end{equation*}
Since geodesics minimise energy, we have 
\begin{equation*}
    \eE (\omega)=\int_{0}^{\tau} |\dot{\omega}_u|^{2} du + \int_{\tau}^{1} |\dot{\omega}_u|^{2} du \geqslant \int_{0}^{\tau} |\dot{\bar{\omega}}_u|^2 du + \int_{\tau}^{1} |\dot{\bar{\omega}}_u|^2 du = \eE (\bar{\omega})\;.
\end{equation*}
It remains bound $\eE (\bar{\omega})$ from below. We will use the second variation of energy formula to achieve this. 

Let $\varphi: [0,1] \rightarrow \HH^d$ be the geodesic parametrised at uniform speed with $\varphi_0 = \beta_\tau$ and $\varphi_1 = \omega_\tau = \bar{\omega}_\tau$. For each $\theta \in [0,1]$, define
\begin{equation*}
    h^\theta_u \triangleq
    \begin{cases}
\text{geodesic from \ensuremath{o} to \ensuremath{\varphi_\theta},} & u \in [0, \tau]\;;\\
\text{geodesic from } \varphi_{\theta}\ \text{to } \omega_{1}=\bar{\omega}_{1}, & u \in [\tau,1]\;.
\end{cases}
\end{equation*}
Then we have $h^0 = \beta$ and $h^1 = \bar{\omega}$. 
We set $\dot{h} \triangleq \partial_u h$ and $\partial_\theta h$ to be the tangential and variational vector fields respectively. With an abuse of notation, we write
\begin{equation*}
    \eE(\theta) \triangleq \int_{0}^{1} |\dot{h}^\theta_u|^ du
\end{equation*}
be the energy of $h^\theta$, with $\eE(0) = \eE(\beta)$ and $\eE(1) = \eE(\bar{\omega})$. 

According to the second variation of energy formula (\cite[Theorem 4.1.1]{Jos05}) and negativity of curvature, we have
\begin{equation} \label{eq:EneLow1}
    \eE''(\theta) = \int_{0}^{1} |(\nabla_{\partial_{u}} \partial_\theta h^\theta)(u)|^{2} du - \int_{0}^{1} \langle R(\dot{h}, \partial_\theta h) \partial_\theta h, \dot{h} \rangle du \geqslant \int_{0}^{1} |(\nabla_{\partial_u} \partial_\theta h)(u)|^2 du\;.
\end{equation}
Here $R(\cdot,\cdot)\cdot$ is the Riemannian curvature tensor, and the inequality comes from the negativity of the curvature of $\HH^d$. 

To bound the integral on the right hand side from below, let $\{e_{1},\cdots,e_{d}\}$ be an orthonormal basis of $T_o \HH^d$ and parallel translate it along the geodesics $(h^\theta_u)_{u \in [0,\tau]}$ to obtain an orthonormal frame field $\{E_{1},\cdots,E_{d}\}$. We write
\begin{equation*}
    \partial_\theta h^\theta_u = \sum_{j=1}^{d} C^\theta_j (u) \, E_j\;, \quad u \in [0,\tau].
\end{equation*}
Note that $\partial_\theta h^\theta_0 = 0$ for all $\theta$. This implies
\begin{equation*}
    |C^\theta_j(u)|^2 = \Big( \int_{0}^{u} \partial_{r} C^\theta_j (r) dr \Big)^{2} \leqslant \tau \int_{0}^{\tau} \Big(\partial_{r} C^\theta_j (r) \Big)^{2} d r\;.
\end{equation*}
Thus, we have
\begin{equation} \label{eq:EneLow2}
|\partial_\theta h^\theta_u|^{2} = \sum_{j=1}^{d} |C^\theta_j (u)|^{2} \leqslant \tau \sum_{j=1}^{d} \int_{0}^{\tau} \big| \partial_{r} C^\theta_j(r) \big|^{2} d r \leqslant \int_{0}^{1} \big|(\nabla_{\partial_{r}} \partial_\theta h^\theta)(r) \big|^2 d r
\end{equation}
for all $u \in[0,\tau]$. The same inequality also holds for $r \in [\tau,1]$ (by the same argument). As a result, one concludes from \eqref{eq:EneLow1} and \eqref{eq:EneLow2} that 
\begin{equation*}
  \eE''(\theta) \geqslant \sup_{u \in[0,1]} |\partial_\theta h^\theta_u|^{2} \geqslant \frac{\eps^{2}}{16}\;,
\end{equation*}
where the last inequality follows from the fact that $\big| \partial_\theta h^\theta_\tau |_{\theta = 0} \big| = d(\bar{\omega}_\tau, \beta_\tau) > \frac{\eps}{4}$. Since $\eE'(0)=0$, it then follows that 
\begin{equation*}
    \eE(\bar{\omega}) = \eE(1) = \eE(0) + \int_{0}^{1} (1-\theta) \eE''(\theta) d \theta \geqslant d^2(o,\omega_{1}) + \frac{\eps^2}{32}\;.
\end{equation*}
A further application of the triangle inequality yields
\begin{equation*}
    d^2 (o,\omega_{1}) \geqslant \big( d(o,\alpha^s_1) -d(\alpha^s_1, \omega_{1}) )^{2} \geqslant L^2 - 2 \zeta L\;.
\end{equation*}
We then obtain the desired inequality \eqref{eq:EneLow} and completes the proof of the lemma.
\end{proof}

\begin{rem}
Lemma \ref{lem:EneLow} may fail in positively curved manifolds. For
instance, all great semicircles connecting the north and south poles
on the sphere are geodesics and they have the same energy. The lemma
holds on general Riemannian manifolds when the geodesic does not contain conjugate points and the deviation is small.
\end{rem}

\end{appendices}


\begin{thebibliography}{Adl90}
\bibitem[Adl90]{Adl90}R.J. Adler. An introduction to continuity,
extrema, and related topics for general Gaussian processes. \textit{Lecture
Notes-Monograph Series} 12 (1990): 1--155.

\bibitem[BDM25]{BDM25}I. Bailleul, N. Dang and A. Mouzard. Analysis of the Anderson operator. \textit{ArXiv preprint}, 2025.

\bibitem[BCH24]{BCH24}F. Baudoin, L. Chen, C. Huang, C. Ouyang, S. Tindel and J. Wang. Parabolic Anderson model in bounded domains of recurrent metric measure spaces. \textit{ArXiv preprint}, 2024.

\bibitem[BCO24]{BCO24}F. Baudoin, H. Chen and C. Ouyang. Moment estimates for the stochastic heat equation on Cartan-Hadamard manifolds. \textit{ArXiv preprint}, 2024.


\bibitem[BOT23]{BOT23}F. Baudoin, C. Ouyang, S. Tindel and J. Wang. Parabolic Anderson model on Heisenberg groups: the It\^o setting. \textit{J. Funct. Anal.} 285 (1) (2023): 1-44.

\bibitem[CM95]{CM95}R.A. Carmona and S.A. Molchanov. Stationary parabolic
Anderson model and intermittency. \textit{Probab. Theory Relat. Fields}
102 (1995): 433--453.

\bibitem[Che14]{Che14}X. Chen. Quenched asymptotics for Brownian motion in generalized Gaussian potential. \textit{Ann. Probab.} 42 (2) (2014): 576-622.

\bibitem[CO25]{CO25}H. Chen and C. Ouyang. Global geometry within an SPDE well-posedness problem. \textit{ArXiv preprint}, 2025. 

\bibitem[COV24]{COV24}L. Chen, C. Ouyang and W. Vickery. Parabolic Anderson model with colored noise on torus. \textit{ArXiv preprint}, 2023. To appear in \textit{Bernoulli}.

\bibitem[Com17]{Com17}F. Comets. \textit{Directed polymers in random environments}. \'Ecole d'\'Et\'e de Probabilit\'es de Saint-Flour XLVI-2016. Springer, 2017.

\bibitem[CvZ21]{CvZ21} K. Chouk, W. van Zuijlen, Asymptotics of the eigenvalues of the Anderson Hamiltonian with white noise potential in two dimensions, \textit{Ann. Probab.}, 49 (4) (2021): 1917-1964. 

\bibitem[DDD18]{DDD18}A. Dahlqvist, J. Diehl and B.K. Driver. The parabolic Anderson model on Riemann surfaces. \textit{Probab. Theory Relat. Fields} 174 (2019): 369-444.

\bibitem[DM98]{DM98}E. B. Davies and N. Mandouvalos. Heat kernel
bounds on hyperbolic space and Kleinian groups. \textit{Proc. London
Math. Soc.} 57 (3) (1998): 182--208.

\bibitem[FW98]{FW98}M.I. Freidlin and A.D. Wentzell. Random perturbations
of dynamical systems. Springer, 1998.

\bibitem[GK00]{GK00} J. G\"artner and W. K\"onig. Moment asymptotics
for the continuous parabolic Anderson model. \textit{Ann. Appl. Probab.}
10 (1) (2000): 192--217.

\bibitem[GKM00]{GKM00}J. G\"artner, W. K\"onig and S.A. Molchanov.
Almost sure asymptotics for the continuous parabolic Anderson model.
\textit{Probab. Theory Relat. Fields} 118 (2000): 547--573.

\bibitem[GX25]{GX25}X. Geng and W. Xu. Parabolic Anderson model in
the hyperbolic space. Part I: annealed asymptotics. \textit{ArXiv preprint},
2025.

\bibitem[HS23]{HS23}M. Hairer and H. Singh. Regularity structures on manifolds and vector bundles. \textit{ArXiv preprint}, 2023.


\bibitem[HKS21]{HKS21}F. den Hollander, W. K\"onig and R.S. dos Santos. The parabolic Anderson model on a Galton-Watson tree. \textit{In and Out of Equilibrium 3: Celebrating Vladas Sidoravicius}. Progress in Probability, Birkh\"auser, 2021.

\bibitem[HW23]{HW23}F. den Hollander and D. Wang. The annealed parabolic Anderson model on a regular tree. \textit{ArXiv preprint}, 2023.

\bibitem[Hsu90]{Hsu90}E. Hsu. Brownian bridges on Riemannian manifolds.
\textit{Probab. Theory Rel. Fields} 84 (1990): 103--118.

\bibitem[Hsu02]{Hsu02}E. Hsu. \textit{Stochastic analysis on manifolds}.
Americal Mathematical Society, 2002.

\bibitem[Jos05]{Jos05}J. Jost. \textit{Riemannian geometry and geometric
analysis}. Fourth edition. Springer, 2005.

\bibitem[KS88]{KS88}I. Karatzas and S.E. Shreve. \textit{Brownian
motion and stochastic calculus}. Springer-Verlag, 1988.

\bibitem[K\"on16]{K\"on16}W. K\"onig. \textit{The parabolic Anderson model}kh\"auser, 2016.

\bibitem[KPvZ22]{KPvZ22}W. K\"onig, N. Perkowski and W. van Zuijlen. Longtime asymptotics of the two-dimensional parabolic Anderson model with white-noise potential. \textit{Ann. Inst. H. Poincar\'e Probab. Statist.} 58 (3) (2022): 1351-1384.

\bibitem[Mou22]{Mou22}A. Mouzard. Weyl law for the Anderson Hamiltonian on a two-dimensional manifold. \textit{Ann. Inst. H. Poincar\'e Probab. Statist.} 58 (3) (2022): 1385-1425.

\bibitem[RW94]{RW94}L.C.G. Rogers and D. Williams. \textit{Diffusions,
Markov processes, and martingales}. Volume 1, 2nd Edition. John Wiley
\& Sons, 1994.

\bibitem[Sul83]{Sul83}D. Sullivan. The Dirichlet problem at infinity
for a negatively curved manifold. \textit{J. Differential Geometry}
18 (1983): 723--732.

\bibitem[TV02]{TV02}S. Tindel and F. Viens. Almost sure exponential behaviour for a parabolic SPDE on a manifold. \textit{Stoch. Process. Their Appl.} 100 (1-2) (2002): 53-74.

\end{thebibliography}
\end{document}